\numberwithin{equation}{section}
\newtheorem*{rep@theorem}{\rep@title}
\newcommand{\newreptheorem}[2]{%
\newenvironment{rep#1}[1]{%
 \def\rep@title{#2 \ref{##1}}%
 \begin{rep@theorem}}%
 {\end{rep@theorem}}}
\newtheorem{theorem}{Theorem}[section]
\newtheorem{lemma}[theorem]{Lemma}
\theoremstyle{definition}
\newtheorem{example}[theorem]{Example}
\newtheorem{remark}[theorem]{Remark}
\theoremstyle{remark}
\newcounter{thmenumerate}
\newcounter{xenumerate}
\newcommand\E{\operatorname{\mathbb E{}}}
\renewcommand\Pr{\operatorname{\mathbb P{}}}
\newcommand\Po{\operatorname{Po}}
\newcommand\eps{\varepsilon}
\renewcommand\phi{\varphi}
\newcommand{\sumjo}{\sum_{j=0}^\infty}
\newcommand{\sumj}{\sum_{j=1}^\infty}
\newcommand{\sumko}{\sum_{k=0}^\infty}
\newcommand{\sumk}{\sum_{k=1}^\infty}
\newcommand{\summ}{\sum_{m=1}^\infty}
\newcommand\gd{\delta}
\newcommand\gs{\sigma}
\newcommand\tM{\tilde M}  
\newcommand\set[1]{\ensuremath{\{#1\}}}
\newcommand\bigset[1]{\ensuremath{\bigl\{#1\bigr\}}}
\newcommand\bigpar[1]{\bigl(#1\bigr)}
\newcommand\Bigpar[1]{\Bigl(#1\Bigr)}
\newcommand\abs[1]{|#1|}
\newcommand\bigabs[1]{\bigl|#1\bigr|}
\newcommand\lrabs[1]{\left|#1\right|}
\def\rompar(#1){\textup(#1\textup)}    
\newcommand\xfrac[2]{#1/#2}
\def\xexp(#1){e^{#1}}
\newcommand\ceil[1]{\lceil#1\rceil}
\newcommand\ntoo{\ensuremath{{n\to\infty}}}
\newcommand\ttoo{\ensuremath{{t\to\infty}}}
\newcommand\punkt{.\spacefactor=1000}    
\newcommand\iid{i.i.d\punkt}
\newcommand\ie{i.e\punkt}
\newcommand\eg{e.g\punkt}
\newcommand\cf{cf\punkt}
\newcommand{\as}{a.s\punkt}
\newcommand\whp{w.h.p\punkt}
\newcommand{\tend}{\longrightarrow}
\newcommand\asto{\overset{\mathrm{a.s.}}{\tend}}
\newcommand\op{o_{\mathrm p}}
\newcommand{\refT}[1]{Theorem~\ref{#1}}
\newcommand{\refL}[1]{Lemma~\ref{#1}}
\newcommand{\refS}[1]{Section~\ref{#1}}
\newcounter{CC}
\newcommand{\CC}{\stepcounter{CC}\CCx} 
\newcommand{\CCx}{C_{\arabic{CC}}}     
\newcommand{\CCdef}[1]{\xdef#1{\CCx}}     
\newcounter{cc}
\newcommand{\cc}{\stepcounter{cc}\ccx} 
\newcommand{\ccx}{c_{\arabic{cc}}}     
\newcommand{\ccdef}[1]{\xdef#1{\ccx}}     
\newcommand\cB{\mathcal B}
\newcommand\cE{\mathcal E}
\newcommand\cS{\mathcal S}
\newcommand\cV{\mathcal V}
\newcommand\N{{\mathbb N}}
\newcommand\Z{{\mathbb Z}}
\newcommand\gln{\lambda_n}
\begin{document}
\title[Greedy Independent Set in a Random Graph]
{The Greedy Independent Set in a Random Graph with Given Degrees}

\date{19 October 2015} 

\author{Graham Brightwell}
\address{Department of Mathematics, London School of Economics,
Houghton Street, London WC2A 2AE, United Kingdom}
\email{g.r.brightwell@lse.ac.uk}
\urladdr{http://www.maths.lse.ac.uk/Personal/graham/}

\author{Svante Janson}
\thanks{Svante Janson is supported by the Knut and Alice Wallenberg Foundation}
\address{Department of Mathematics, Uppsala University, PO Box 480,
SE-751~06 Uppsala, Sweden}
\email{svante.janson@math.uu.se}
\urladdr{http://www.math.uu.se/{\tiny$\sim$}svante/}

\author{Malwina Luczak} \thanks{The research of Malwina Luczak is supported by an EPSRC Leadership Fellowship, grant reference EP/J004022/2.}
\address{School of Mathematics, Queen Mary, University of London}
\email{m.luczak@qmul.ac.uk}
\urladdr{http://www.maths.qmul.ac.uk/$\sim$luczak/}

\keywords{greedy independent set; jamming constant; configuration model}
\subjclass[2000]{60C05, 05C80, 60J27}

\begin{abstract}
We analyse the size of an independent set in a random graph on $n$ vertices with specified vertex degrees, constructed via a simple greedy algorithm:
order the vertices arbitrarily, and, for each vertex in turn, place it in the independent set unless it is adjacent to some vertex
already chosen.  We find the limit of the expected proportion of vertices in the greedy independent set as $n \to \infty$, expressed as an integral whose upper limit is defined implicitly, valid whenever the second moment of a random vertex degree is
uniformly bounded. We further show that the random proportion of vertices in the independent set converges to the jamming constant as $n \to \infty$. The results hold under weaker assumptions in a random multigraph with given degrees constructed via the configuration model.
\end{abstract}

\maketitle

\section{Introduction}\label{sec:intro}

In this paper, we analyse a simple greedy algorithm for constructing an independent set in a random graph chosen uniformly from those with
a given degree sequence, under some mild regularity assumptions.
We obtain a nearly
explicit formula for the size of the independent set constructed.

Our method involves generating the random graph via the configuration model, simultaneously with running the greedy algorithm.  In the present
context, this idea was first used by Wormald~\cite{Wormald}, who treated the special case of random regular graphs.  Our methods follow those
developed in a sequence of papers by Janson and Luczak~\cite{JL1,JL2}, and most particularly a recent paper of Janson, Luczak and Windridge~\cite{JLW} (see the proof of Theorem 2.6(c) in that paper).

\medskip


We consider the following natural greedy process for generating an independent set $\cS$ in an $n$-vertex (multi)graph.  We start with $\cS$ empty,
and we consider the vertices one by one, in a uniformly random order.  At each step, if the vertex under consideration is not adjacent to a vertex in
$\cS$, we put it in~$\cS$, and otherwise we do nothing.  Let $S_\infty = S^{(n)}_\infty$ be the size of $\cS$ at the end of the process; the expected
value of $S_\infty/n$ is sometimes called the {\em jamming constant} of the (multi)graph.  (Note: a multigraph may have loops, and it might be
thought natural to exclude a looped vertex from the independent set, but as a matter of convenience we do not do this, and we allow looped vertices into
our independent set.
Ultimately, the main interest is in the case of
graphs.)

For $n \in \N$ and a sequence $(d_i)_1^n$ of non-negative integers, let $G(n, (d_i)_1^n)$ be a simple graph (i.e.\ with no loops or multiple edges) on $n$ vertices chosen uniformly at random from among all graphs with degree sequence $(d_i)_1^n$. (We tacitly assume there is some such graph, so $\sum_{i=1}^n d_i$ must be even, at least.) We let $G^*(n, (d_i)_1^n)$ be the random multigraph with given degree sequence $(d_i)_1^n$ defined by the configuration model, first introduced in Bollob\'as~\cite{Bollobas2}. That is, we take a set of $d_i$ half-edges for each vertex $i$ and combine the half-edges into pairs by a uniformly random matching of the set of all half-edges. Conditioned on the multigraph being a (simple) graph, we obtain $G(n, (d_i)_1^n)$, the uniformly distributed random graph with the given degree sequence.

Let
\begin{equation}
n_k = n_k (n) = \# \{i: d_i = k \}, \qquad k \in \Z^+:=\set{0,1,2,\dots},
\end{equation}
the number of vertices of degree $k$ in $G(n, (d_i)_1^n)$ (or $G^*(n, (d_i)_1^n)$). Then $\sum_k n_k = n$ and $\sum_k k n_k$ is even.

Let $(p_k)_0^{\infty}$ be a probability distribution, and assume that $n_k/n
\to p_k$ for each $k \in \Z^+$. We assume further that the distribution
$(p_k)_0^{\infty}$ has a finite and positive mean $\lambda = \sumk k p_k$
and that the average vertex degree  $\sum_k k n_k/n$ converges to
$\lambda$. (Equivalently, the distribution of the degree of a random vertex
is uniformly integrable.
In~\refS{sec:noui}, we consider the jamming constant of a random multigraph
when we relax this condition.)


We prove our results for the greedy independent set process on $G^*$, and,
by conditioning on $G^*$ being simple, we deduce that these results also
hold for the greedy independent set process on $G$. For this, we use a
standard argument that relies on the probability that $G^*$ is simple being
bounded away from zero as $n \to \infty$. By the main theorem
of~\cite{Janson}
(see also \cite{JansonII})
this occurs if and only if
$\sumk k^2 n_k(n) =O(n)$.
(Equivalently, the second moment of the degree distribution of a random
vertex is uniformly bounded.)


%

Our aim in this paper is to determine the asymptotics of the jamming
constant of the random graph $G(n, (d_i)_1^n)$ (or random multigraph
$G^*(n,(d_i)_1^n)$) in the limit
as $n \to \infty$, and to show that the size of the greedy random
independent set in $G(n, (d_i)_1^n)$ or $G^*(n,(d_i)_1^n)$ scaled by the
size of the graph converges to the jamming constant.

\begin{theorem} \label{thm.main}
Let $(p_k)_0^{\infty}$ be a probability distribution, and let $\lambda = \sumk k p_k \in (0, \infty)$. Assume that $n_k/n \to p_k$ for each $k \in \Z^+$ and that $\sumk k n_k/n \to \lambda$ as $n \to \infty$.

Let $S^{(n)}_\infty$ denote the size of a random greedy independent set in the random multigraph $G^*(n,(d_i)_1^n)$.

Let $\tau_\infty$ be the unique value in $(0,\infty]$ such that
\begin{equation} \label{eq.tauinfty1}
\lambda \int_0^{\tau_\infty} \frac{e^{-2\sigma}}{\sum_k k p_k e^{-k\sigma}} \, d\sigma = 1.
\end{equation}
Then
\begin{equation} \label{eq.jamming1}
\frac{S^{(n)}_\infty}{n} \to \lambda \int_0^{\tau_\infty} e^{-2\sigma}
\frac{\sum_k p_k e^{-k\sigma}}{\sum_k k p_k e^{-k\sigma}} \, d\sigma \quad \mbox{in probability}.
\end{equation}

The same holds if $S^{(n)}_\infty$ is the size of a random greedy
independent set in the random graph $G (n,(d_i)_1^n)$, if we assume
additionally that
$\sumk k^2 n_k = O(n)$ as $n \to \infty$.
\end{theorem}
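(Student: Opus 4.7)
The plan is to follow the Wormald / Janson--Luczak / Janson--Luczak--Windridge strategy of coupling the configuration-model construction to the greedy algorithm and analysing the resulting Markov chain via a fluid-limit argument; the simple-graph statement will then follow from the multigraph one by a standard conditioning argument. Assign each vertex $v$ an independent $\Exp(1)$ clock and process vertices in order of their ring times. When $v$'s clock rings at time $t$, check whether some half-edge of $v$ has already been paired to a half-edge of a previously-added member of $\cS$; if not, add $v$ to $\cS$ and pair each of $v$'s as-yet unmatched half-edges with a uniformly random unmatched half-edge from the current pool, otherwise do nothing. Call a vertex \emph{active} at time $t$ if it is unprocessed and unblocked (so its full original degree is still in the pool), \emph{blocked} if some half-edge of it has already been paired to an $\cS$-vertex (whether or not it has itself rung), and otherwise it is in $\cS$. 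Write $A_k(t)$ for the number of active vertices of original degree $k$, $B_j(t)$ for the number of blocked vertices with $j$ unmatched half-edges left, $S(t) = |\cS|$, and $H(t) = \sum_k k A_k(t) + \sum_j j B_j(t)$ for the pool size; then $S^{(n)}_\infty = S(\infty)$, and the whole process is a continuous-time Markov chain.

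The fluid-limit calculation is short. An active degree-$k$ vertex rings at rate $A_k$, and then to leading order each of its $k$ half-edges matches to an independent uniform half-edge in the pool, hitting a given vertex with $j$ unmatched half-edges with probability $\approx kj/H$. Scaling $a_k = A_k/n$, $h = H/n$, $m = \sum_k k a_k$, $s = S/n$, the nominal fluid-limit ODEs are
\begin{equation*}
\dot a_k = -a_k(1 + k m/h), \qquad \dot h = -2m, \qquad \dot s = \sum_k a_k,
\end{equation*}
with $a_k(0) = p_k$ and $h(0) = \lambda$. Integrating gives $a_k(t) = p_k e^{-t - k\sigma(t)}$ with $\sigma(t) := \int_0^t m/h\,du$; combining $\dot h = -2m$ with $\dot\sigma = m/h$ yields $h(t) = \lambda e^{-2\sigma(t)}$, and therefore $m(t) = e^{-t}\sum_k k p_k e^{-k\sigma(t)}$. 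These identities rewrite $\dot\sigma = m/h$ as $-d(e^{-t})/d\sigma = \lambda e^{-2\sigma}/\sum_k k p_k e^{-k\sigma}$; integrating from $0$ to $\sigma(t)$ and sending $t \to \infty$ recovers the defining equation~\eqref{eq.tauinfty1} for $\tau_\infty = \sigma(\infty)$. The same change of variables in $s(\infty) = \int_0^\infty e^{-u}\sum_k p_k e^{-k\sigma(u)}\,du$ reproduces the jamming expression~\eqref{eq.jamming1}.

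The technical core is to make this limit rigorous. I would use the Doob--Meyer decomposition as in~\cite{JL1,JL2,JLW}: write $A_k(t)/n - a_k(t)$ and the analogous differences for $h$ and $s$ as the sum of a drift correction (controlled by Gronwall) and a zero-mean martingale, bound the predictable quadratic variation term by term, and obtain uniform-in-probability convergence on compact time windows $[0,T]$. The principal obstacle is that, under only the uniform-integrability hypothesis on the degree distribution rather than a bounded second moment, the state has infinitely many coordinates and the drifts carry unbounded $k$-factors. I would address this by truncating at a large degree $K$: uniform integrability guarantees that the total number of half-edges at vertices of degree $>K$ is $o(n)$ uniformly in $n$ as $K\to\infty$, and such vertices become blocked at a rate proportional to their degree, so they contribute negligibly both to the dynamics and to $S^{(n)}_\infty$. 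After proving convergence for the $\leq K$-truncated system and letting $K\to\infty$, I would extend from $[0,T]$ to all of $[0,\infty]$ by choosing $T = T(\eps)$ large enough that the ODE solution retains at most an $\eps$ fraction of active vertices, so that the leftover contribution to $S^{(n)}_\infty/n$ is at most $\eps + \op(1)$.

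Finally, the simple-graph statement follows from a standard argument: under the additional hypothesis $\sum_k k^2 n_k = O(n)$, the results of~\cite{Janson,JansonII} ensure that $G^*(n,(d_i)_1^n)$ is simple with probability bounded away from $0$ as $n\to\infty$, so any event of probability $1-o(1)$ on $G^*$, including the in-probability convergence of $S^{(n)}_\infty/n$ to the quantity in~\eqref{eq.jamming1}, transfers to $G$ upon conditioning on simplicity.
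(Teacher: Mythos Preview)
Your proposal is correct and arrives at exactly the same process, state variables, fluid-limit ODE system, and explicit solution as the paper: your $a_k$, $h$, $m$, $\sigma$ are the paper's $e_t(k)$, $u_t$, $\sum_k k e_t(k)$, $\tau$, and your substitution $a_k(t)=p_ke^{-t-k\sigma(t)}$ together with the time change $\dot\sigma=m/h$ is precisely the paper's change of variables $h_t(k)=e^te_t(k)$, $d\tau/dt=\sum_jje_t(j)/u_t$.

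Where you differ is in the rigorisation. The paper does not truncate and does not use Gronwall. Instead it (i) writes the martingale decomposition for $U_t$, $E_t(k)$, $S_t$, (ii) bounds the quadratic variations by $o(n^2)$ directly from uniform summability of $kn_k/n$, (iii) observes that the drift-subtracted processes are uniformly Lipschitz in $t$, hence tight in $C[0,\infty)$ by Arzel\`a--Ascoli, (iv) passes to subsequential \as\ limits via Skorokhod coupling, and (v) passes the limit inside the infinite sums $\sum_k kE_t(k)/n$ and $\sum_j p_{jk}(U_s)E_s(j)/n$ using uniform summability of $kn_k/n$ rather than truncation. Any limit point is then shown to satisfy the integral equations and, after the explicit solution establishes uniqueness, the full sequence converges. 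The paper also needs, and proves, the a priori bound $u_t\ge\lambda e^{-2t}$ to keep $1/u_t$ finite in the limiting equations; you will need the analogous lower bound on $H_t/n$ to control the Lipschitz constants in your Gronwall step, and you should also check that the $o(n)$ half-edges attached to degree-$>K$ vertices perturb the blocking rates of the truncated system only by $o(1)$. Your route is closer to Wormald's original differential-equations method and is more quantitative; the paper's compactness route trades quantitative control for lighter bookkeeping, since uniform summability handles the infinite-dimensionality in one stroke.
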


Since $S_\infty^{(n)} / n$ is bounded (by 1), it follows that the expectation
$\E S_\infty^{(n)} / n$ also tends to the limit in \eqref{eq.jamming1} under
the hypotheses in the theorem.

Our proof yields also the asymptotic degree distribution in the random
greedy independent set.

\begin{theorem} \label{thm.degrees}
Under the assumptions of Theorem \ref{thm.main},
let $S^{(n)}_\infty(k)$ denote the number of vertices of degree $k$ in the
random greedy independent set in the random multigraph $G^*(n,(d_i)_1^n)$.
Then, for each $k=0,1,\dots$,
\begin{equation} \label{eq.jammingk}
\frac{S^{(n)}_\infty(k)}{n} \to \lambda \int_0^{\tau_\infty} e^{-2\sigma}
\frac{p_k e^{-k\sigma}}{\sum_j j p_j e^{-j\sigma}} \, d\sigma \quad \mbox{in
  probability}.
\end{equation}

The same holds
in the random graph $G (n,(d_i)_1^n)$, if we assume
additionally that
$\sumk k^2 n_k = O(n)$ as $n \to \infty$.
\end{theorem}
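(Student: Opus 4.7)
The plan is to piggyback on the proof of \refT{thm.main} by augmenting the state vector tracked during the coupled configuration-model/greedy process with an extra coordinate $S^{(n)}(k,t)$ per degree class, recording the number of vertices of original degree $k$ placed in $\cS$ by step $t$. Recall the coupling underlying \refT{thm.main}: at each step, an unexplored vertex $v$ is chosen uniformly from the unexplored pool; if $v$ is still alive (none of its $d_v$ half-edges has yet been matched) we add $v$ to $\cS$ and then pair each half-edge at $v$ with a uniformly random unmatched half-edge, killing the other endpoint. Writing $A_k(t)$ for the number of alive, unexplored vertices of original degree $k$ at time $t$, and $U(t)$ for the total number of unexplored vertices, the proof of \refT{thm.main} (following the framework of \cite{JL1,JL2,JLW}) already establishes fluid-limit approximations for $A_k(\cdot)/n$ and $U(\cdot)/n$, uniformly in a natural time reparametrization $t \leftrightarrow \sigma \in [0,\tau_\infty]$, that essentially recover $A_k/n \approx p_k e^{-k\sigma}$.

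The key additional observation is that $S^{(n)}(k,t+1)-S^{(n)}(k,t)$ equals the indicator of the event that the vertex chosen at step $t+1$ has degree $k$ and is alive, so
\begin{equation*}
\E\bigsqpar{S^{(n)}(k,t+1) - S^{(n)}(k,t) \bigm| \cF_t} = \frac{A_k(t)}{U(t)}.
\end{equation*}
Consequently $S^{(n)}(k,\cdot)$ decomposes into the predictable sum $\sum_{s<t} A_k(s)/U(s)$ plus a martingale whose quadratic variation is at most $t$ and whose supremum is therefore $\op(n)$ by Doob's inequality. Substituting the fluid limits for $A_k$ and $U$ into the predictable part, and applying the same time change used to derive \eqref{eq.jamming1}, yields the integral on the right-hand side of \eqref{eq.jammingk}; a sanity check is that summing over $k$ recovers \eqref{eq.jamming1}.

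The main obstacle is controlling the convergence uniformly up to $\tau_\infty$, in particular when $\tau_\infty = \infty$ and the denominator $\sum_j j p_j e^{-j\sigma}$ degenerates. This, however, is precisely the control already achieved in the proof of \refT{thm.main}: since $0 \le S^{(n)}(k,t) \le |\cS(t)|$ throughout and $|\cS(t)|/n$ converges uniformly in $t$, the degree-$k$ subcount inherits the same uniform estimates, and dominated convergence along the fluid trajectory handles the tails. Finally, the transfer from the multigraph $G^*(n,(d_i)_1^n)$ to the simple graph $G(n,(d_i)_1^n)$ under the extra hypothesis $\sumk k^2 n_k = O(n)$ is exactly the argument used for \refT{thm.main}: $\Pr(G^* \text{ is simple})$ is bounded away from $0$ by \cite{Janson,JansonII}, and convergence in probability of the bounded quantities $S^{(n)}_\infty(k)/n$ is preserved when we condition on this event of non-vanishing probability.
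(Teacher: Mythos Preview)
Your approach is essentially the paper's: the paper observes that the drift of $S_t(k)$ is $E_t(k)$ (the number of empty degree-$k$ vertices), bounds the resulting martingale via its quadratic variation and Doob's inequality, integrates to get $S_t(k)/n \to s_t(k):=\int_0^t e_s(k)\,ds$, extends to $t=\infty$ by the same triangle-inequality/tail argument already used for $S_t$, and then applies the time change $t\mapsto\tau$ to obtain \eqref{eq.jammingk}. One caveat worth flagging: the coupling in \refT{thm.main} runs in \emph{continuous} time (independent rate-$1$ exponential clocks on all vertices), not the discrete-step scheme you describe, so your $A_k(t)/U(t)$ is simply the drift $E_t(k)$ in the paper's parametrization, your $U(t)$ collides with the paper's $U_t$ (which counts unpaired half-edges, not unexplored vertices), and the fluid limit is $e_t(k)=e^{-t}p_ke^{-k\tau_t}$ rather than $p_ke^{-k\sigma}$---the extra $e^{-t}$ factor only cancels after the change of variables.
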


\begin{remark}
We do not know whether the theorems hold also for the simple random graph
$G(n, (d_i)_1^n)$ without the additional hypothesis that $\sum_k k^2
n_k=O(n)$.
We leave this as an open problem.
(See Example \ref{Estar} for a counter example if we do not even assume that
$kn_k/n$ is uniformly summable.)
\end{remark}

One natural special case concerns a random $d$-regular graph, which is covered by the case where $p_d =1$, for some $d$.  It is not hard to see that the
jamming constant of a random 2-regular graph, in the limit as $n \to \infty$, is the same as that of a single cycle (or path), again in the limit as the
number of vertices tends to infinity.  An equivalent version of the greedy
process in this case is for ``cars'' to arrive sequentially, choose some
pair of
adjacent vertices on the cycle, and occupy both if they are both currently
empty.
This is then a discrete variant of the R\'enyi parking problem~\cite{Renyi}.
The limiting density of occupied vertices was first calculated by Flory~\cite{Flory} in 1939 to be $\frac12 (1-e^{-2})$.  See also Page~\cite{Page},
Evans~\cite{Evans}
and Gerin \cite{Gerin}.

The case of random $d$-regular graphs for $d \ge 3$
was first treated by Wormald~\cite{Wormald}, in a 1995 paper where he first gives a general scheme for his differential equations method, and independently by
Frieze and Suen~\cite{FS} for $d=3$.  Wormald gives an analysis of exactly the random process we consider, in the special case of $d$-regular graphs, and
obtains the result that the independent set constructed is,
w.h.p.\ (i.e., with probability tending to 1 as $n \to \infty$), of size
\begin{equation} \label{eq.d-reg}
\frac n 2 \left( 1 - \left( \frac{1}{d-1} \right)^{2/(d-2)} \right) + o(n).
\end{equation}
We shall verify shortly that our results give the formulae above in these cases.  Lauer and Wormald~\cite{LW}, and Gamarnik and Goldberg~\cite{GG}, extended
this result to give upper and lower bounds on the size of a random greedy independent set in any $d$-regular graph of girth at least~$g$, which
converge to the expression in~(\ref{eq.d-reg}) as~$g\to \infty$.

The problem of finding the size of a greedy independent set in a
more general
random graph with given degrees was first studied
in a recent preprint by Bermolen, Jonckheere and Moyal~\cite{BJM}.  Their
approach is superficially similar to ours, in that they construct the graph
and the independent set simultaneously, but their analysis is significantly
more complicated. (We explain the difference in more detail once we have
described our process in Section~\ref{sec:process}.)
Under the assumption that the 6th moment of the degree sequence is bounded,
they prove that their process is approximated by the unique solution of an
infinite-dimensional differential equation. The paper gives no explicit
form for the solution (except in the case of a
random $2$-regular graph, and for the Poisson distribution; in the latter
case, the authors substitute a Poisson distribution for the number of empty vertices of degree $k$ and show that this satisfies their equations), and the differential equation itself involves
the second moment of the degree sequence. The authors evaluate the solution
numerically in several explicit instances, and extract the jamming
constant.

We provide a simple system of approximating differential equations involving only the first moment of the degree sequence, and a time change where they have a simple and explicit solution.
We also require only first moment conditions on the degree sequence for our
result on the multigraph $G^*(n, (d_i)_1^n)$, and we assume that the 2nd
moment is uniformly bounded to transfer this result to the random graph
$G(n, (d_i)_1^n)$.

The simple greedy process for creating a random independent set is of interest for its own sake.  In chemistry and physics, this process is called
{\em random sequential adsorption}, and is studied, for instance, as a model of the deposition of a thin film of liquid onto a crystal.  See the surveys of
Evans~\cite{Evans} and Cadilhe, Ara\'ujo and Privman~\cite{CAP}, and the many further references therein, for information on applications of the
greedy algorithm and its variants.  In statistics, the greedy process is
known as {\em simple sequential inhibition}; see for instance
Diggle~\cite[\S 6.7]{Diggle}.
In these and other application areas, there is a ``continuum'' version, typically involving a greedy process for packing $d$-dimensional unit cubes
into $[0,M]^d$ for $M$ large.  In this process, unit cubes arrive in sequence, choose a location for their bottom corner uniformly at random in $[0,M-1]^d$,
and occupy the space if no already-placed cube overlaps it.
(The one-dimensional version is R\'enyi's car-parking process~\cite{Renyi}).  See Penrose~\cite{Penrose} for rigorous results on this car-parking
process in higher dimensions.  There is often also a discrete version, typically taking place on a regular lattice, where an object arrives and selects a
location on the lattice uniformly at random, and then inhibits later objects
from occupying neighbouring points.  Bermolen, Jonckheere and
Moyal~\cite{BJM} and Finch~\cite[\S 5.3]{Finch} also list a number of other
application areas, e.g., to linguistics, sociology and computer science,
again with further references.

Wormald~\cite{Wormald} also analysed a more complicated algorithm that produces, w.h.p., a larger independent set in a random $d$-regular
graph on $n$ vertices.  Subsequent interest in this line of research has indeed mostly been
focussed on finding good bounds on the  size  $I(G_{n,d})$
of a {\em largest} independent set in a random $d$-regular graph $G_{n,d}$,
and in particular its expectation  $\E I(G_{n,d})$:
a standard argument shows that $I(G_{n,d})$ is concentrated around its mean
with fluctuations of the order at most $\sqrt n$.
Bollob\'as~\cite{Bollobas} had earlier used a first moment argument to give
an upper bound on $\E I(G_{n,d})$ of the form $n \gamma^+(d) + o(n)$, where
$\gamma^+(d)$ is a function such that
\begin{equation} \label{eq.gamma}
\gamma^+(d) = \frac{2}{d} \left( \log d - \log \log d + 1 - \log 2 + o(1) \right)
\end{equation}
as $d \to \infty$ (whereas the greedy independent set has size approximately $n \log d / d$ for large $d$).  Frieze and \L uczak~\cite{FL}
gave a lower bound on $\E I(G_{n,d})$  of the form $n \gamma^-(d) + o(n)$, where $\gamma^-$ has the same asymptotic behaviour as in~(\ref{eq.gamma}).
Bayati, Gamarnik and Tetali~\cite{BGT} resolved the
long-standing open question of showing that there is, for each $d$, a constant $\gamma(d)$ such that $\E I(G_{n,d}) = n \gamma(d) + o(n)$.
Recently, Ding, Sly and Sun~\cite{DSS} showed that, for $d$ greater than some (large) $d_0$, $I(G_{n,d})$ has constant fluctuations about a function
of the form $\alpha_d n + \beta_d \log n$, where $\alpha_d$ and $\beta_d$ are constants whose values solve some explicitly given equations.
We are not aware of any work on the largest independent set in a more
general random graph with given vertex degrees.

\medskip

We now investigate the formulae of Theorem~\ref{thm.main}.

If $p_0+p_1 = 1$,
then we find that $\tau_\infty = \infty$, and the formula (\ref{eq.jamming1}) for the limit of $S_\infty/n$ evaluates to
$p_0+\frac12p_1$, which is the expected answer: indeed if a multigraph
has
$n_0$ isolated vertices,
$n_1=n-n_0-o(n)$ vertices of degree~1,
and $\frac12 n_1 +o(n)$ edges in total, then any maximal independent set has size $n_0+\frac12 n_1 + o(n)$.

We shall thus assume from now on that there is some $\ell \ge 2$ such that $p_\ell > 0$.  In this case, we see that
$$
\frac{\lambda e^{-2\sigma}}{\sum_k k p_k e^{-k\sigma}} = \frac{\sum_k k p_k e^{-2\sigma}}{\sum_k k p_k e^{-k\sigma}} > e^{-\sigma}
$$
for all $\sigma > 0$, and hence
$$
\int_0^\infty \frac{\lambda e^{-2\sigma}}{\sum_k k p_k e^{-k\sigma}} \, d\sigma > \int_0^\infty e^{-\sigma} \, d\sigma = 1.
$$
As the integrand is positive and bounded on finite intervals,
this implies that there is a unique finite value $\tau_\infty$ satisfying~(\ref{eq.tauinfty1}).

\medskip

We illustrate Theorem~\ref{thm.main} in several specific cases.  First, consider a random $d$-regular (multi)graph, where $p_d=1$.
Evaluating the integral in~(\ref{eq.tauinfty1}) and setting it equal to~1 gives
\begin{equation}
1 = \int_{\sigma = 0}^{\tau_\infty} e^{(d-2)\sigma} \, d\sigma
= \frac{1}{d-2} ( e^{(d-2)\tau_\infty} - 1),
\end{equation}
for $d \ge 3$, and so $\tau_\infty = \frac{\log(d-1)}{d-2}$.  For $d=2$ we obtain $\tau_\infty = 1$.
Now the formula in (\ref{eq.jamming1}) becomes
\begin{equation} \label{eq.jamming3}
\int_0^{\tau_\infty} e^{-2\sigma} \, d\sigma = \frac12 ( 1- e^{-2\tau_\infty} ) = \frac{1}{2} \left( 1 - \frac{1}{(d-1)^{2/(d-2)}} \right),
\end{equation}
for $d \ge 3$, and $\frac12 (1-e^{-2})$ for $d=2$.  This indeed agrees with Wormald's formula~(\ref{eq.d-reg}), as well as Flory's
formula~\cite{Flory} for $d =2$.

\medskip

We can also verify that Theorem~\ref{thm.main} gives the known answer in the
case of the random graph $G_{n,p}$, where $p = c/n$.  In this case,
the vertex degrees are random, but by conditioning on the vertex degrees
we can apply the results above, with the asymptotic Poisson degree distribution
$p_k = \frac{c^k e^{-c}}{k!}$, for each $k \ge 0$.  We may then
calculate that:
$$
\lambda = \sum_k k p_k = c; \quad \sum_k p_k e^{-k \sigma} = e^{-c} e^{ce^{-\sigma}}; \qquad \sum_k k p_k e^{-k \sigma} = c e^{-c} e^{-\sigma} e^{ce^{-\sigma}}.
$$
From~(\ref{eq.tauinfty1}), we find that
$$
1 = e^c \int_0^{\tau_\infty} e^{-\sigma} e^{-ce^{-\sigma}} \, d\sigma = \frac{e^c}{c} \left[ e^{-ce^{-{\tau_\infty}}} - e^{-c} \right],
$$
and rearranging gives
\begin{equation}
e^{-{\tau_\infty}} = 1 - \frac{\log (c+1)}{c}.
\end{equation}
We now have from (\ref{eq.jamming1}) that
\begin{equation}\label{gnp}
\frac{S_\infty}{n} \to \int_0^{\tau_\infty} e^{-\sigma} \, d\sigma
= 1-e^{-{\tau_\infty}} = \frac{\log (c+1)}{c},
\end{equation}
which agrees with the known value, which can be found from first principles
by a short calculation; see McDiarmid~\cite{McD}.

Furthermore, still considering $G_{n,p}$ with $p=c/n$, Theorem \ref{thm.degrees}
yields
\begin{equation}\label{gnp-k}
\frac{S_\infty(k)}{n}
\to \int_0^{\tau_\infty}\frac{c^k}{k!} e^{-(k+1)\sigma}e^{-ce^{-\sigma}}
\, d\sigma
= \frac{1}c \int_{c-\log(c+1)}^c \frac{x^k}{k!} e^{-x}\,dx.
\end{equation}
Combining \eqref{gnp} and \eqref{gnp-k}, we see that the asymptotic degree
distribution in the random greedy independent set can be described as a
mixture of $\Po(\mu)$, with parameter $\mu$ uniformly distributed in
$[c-\log(c+1),c]$.

\medskip

The structure of the remainder of the paper is as follows.  In Section~\ref{sec:process}, we give a full description of our process, which generates a random multigraph $G^*(n,(d_i)_1^n)$
with the given degrees simultaneously with a greedy independent set.  We
also specify which variables we track through the process. In
Section~\ref{sec:drift}, we calculate the drift of our process. In
Section~\ref{sec:analysis}, we write down the limit differential equations,
prove that the process converges to these equations, and prove
Theorem~\ref{thm.main}.
In \refS{sec:noui}, we show that, for the multigraph version, the
conclusions of Theorem~\ref{thm.main} still hold if $\lambda_n :=\sum_k k n_k/n$ tends to a constant $\lambda$ that may be larger
than $\sum_k k p_k$; we also consider the case where $\lambda_n$ tends to infinity.

\section{Description of process} \label{sec:process}

The key to our analysis is a process that generates the random multigraph and the random independent set in parallel.  The process analysed by
Bermolen, Jonckheere and Moyal~\cite{BJM} is also of this nature, but our process differs in one important way, as we point out below, and our choice
gives us significantly more tractable equations to solve.

Recall from the introduction that, in the configuration model, the random multigraph $G^*(n,(d_i)_1^n)$ is constructed as follows: for each $k$, and each of the $n_k$ vertices $i$ with
degree $d_i = k$, we associate $k$ {\em half-edges} with vertex $i$. These
half-edges are then combined into edges by a uniformly random
matching. Conditioned on $G^*(n,(d_i)_1^n)$ being simple (i.e.\ having no
loops or multiple edges), we obtain the random graph $G(n,(d_i)_1^n)$,
uniformly distributed over all graphs on $n$ vertices with given degree
sequence $(d_i)_1^n$.
Our process will generate $G^*(n,(d_i)_1^n)$ sequentially, by revealing the pairings of the half-edges at a chosen vertex as needed.
The freedom to pair the half-edges in any order allows one to study
other aspects of the random graph as we generate it, in this case a greedy independent set.  This theme has already been exploited many times, e.g., see~\cite{Wormald, JL1, JL2, JLW}.

\medskip

We analyse the following continuous-time Markovian process, which generates
a random multigraph on a fixed set $\cV = \{1, \dots, n\}$ of $n$ vertices
with pre-specified degrees,
so that vertex~$i$ has degree~$d_i$ for $i=1, \dots, n$,
along with an independent set $\cS$ in the multigraph.
At each time $t \ge 0$, the vertex set $\cV$ is partitioned into three classes:
\begin{enumerate}
\item [(a)] a set $\cS_t$ of vertices that have already been placed into the independent set, with all half-edges out of $\cS_t$ paired,
\item [(b)] a set $\cB_t$ of {\em blocked} vertices, where at least one half-edge has been paired with a half-edge from $\cS_t$,
\item [(c)] a set $\cE_t$ of {\em empty} vertices, from which no half-edge has yet been paired.
\end{enumerate}
At all times, the only paired edges are those with at least one endpoint in~$\cS_t$.
For $j =1, 2, \dots$, we set $\cE_t(j)$ to be the set of vertices in $\cE_t$ of degree~$j$.

Initially all vertices are empty, i.e., $\cE_0=\cV$.
Each vertex $v$
has an independent exponential clock, with rate~1.  When the clock of vertex $v \in \cE_t$
goes off, the vertex
is placed into the independent set and all its half-edges are
paired.  This results in the following changes:
\begin{enumerate}
\item [(a)] $v$ is moved from $\cE_t$ to $\cS_t$,
\item [(b)] each half-edge incident to $v$ is paired in turn with some other uniformly randomly chosen currently unpaired half-edge,
\item [(c)] all the vertices in $\cE_t$ where some half-edge has been paired
  with a half-edge from $v$ are moved to $\cB_t$.
\end{enumerate}
Note that some half-edges from $v$ may be paired with half-edges from $\cB_t$, or indeed with other half-edges from $v$: no change in the status
of a vertex results from such pairings.

The clocks of vertices in $\cB_t$ are ignored.

The process terminates when $\cE_t$ is empty.  At this point, there may still be some unpaired half-edges attached to blocked vertices: these may be paired off uniformly at random to complete the creation of the random multigraph.

The pairing generated is a uniform random pairing of all the half-edges.  The independent set generated in the random multigraph can also be
described as follows: vertices have clocks that go off in a random order,
and when the clock at any vertex goes off, it is placed in the
independent set if possible -- thus our process does generate a random greedy independent set in the random multigraph.

One particular feature of our process is that, when a vertex is moved from $\cE_t$ to $\cB_t$, we do not pair its half-edges.  This is in contrast
with the process studied by Bermolen, Jonckheere and Moyal~\cite{BJM}, where they reveal the neighbours of blocked vertices, meaning that the degress of empty vertices can change over time.

The variables we track in our analysis of the process are: $E_t(j) = |\cE_t(j)|$, the number of empty vertices of degree $j$
at time~$t$,  for each $j\ge0$, 
the total number $U_t$ of unpaired half-edges,
and the number $S_t = |\cS_t|$ of vertices that have so far been placed in the independent set.  We claim that the
vector $(U_t,E_t(0),E_t(1), \dots, S_t)$ is Markovian.  At each time~$t$, there are $E_t(j)$ clocks associated with empty vertices of degree~$j$; when the clock at one such vertex $v$ goes off,
its $j$ half-edges are paired uniformly at random within the pool of $U_t$ available half-edges,
so $U_t$ goes down by exactly $2j -2\ell$, where $\ell$ is the number of loops generated at $v$, which has a distribution that can be derived from
$|U_t|$ and the degree of $v$.  The distribution of the numbers of vertices of each $\cE_t(k)$ that are paired with one of the half-edges out of $v$
is a straightforward function of the vector given.  Meanwhile $S_t$ increases by one each time a clock at an empty vertex goes off.

\section{Drift}

\label{sec:drift}

We say that a stochastic process $X_t$, $t\ge0$, has \emph{drift} $Y_t$
if
$X_t-X_0-\int_0^t Y_s\,ds$
is a martingale.
(Or, more generally, a local martingale. In our cases, the processes are
for each $n$ bounded on finite intervals, so any local martingale is a
martingale.)

\begin{lemma}\label{LSdrift}
$S_t$ has drift
\begin{equation}
	\sumko  E_t(k).
  \end{equation}
\end{lemma}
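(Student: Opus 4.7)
The plan is essentially to read off the drift directly from the dynamics of the continuous-time Markov chain described in Section~\ref{sec:process}. The process $S_t$ is a pure jump process: it is piecewise constant and can only increase, by unit jumps, at the instants when a clock attached to a currently empty vertex fires. Blocked vertices' clocks are explicitly ignored, and clocks of vertices already in $\cS_t$ play no role, so jumps of $S_t$ happen only through empty-vertex clocks.

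Next, I would compute the instantaneous rate at which such a jump occurs. Each vertex in $\cE_t$ carries an independent rate-$1$ exponential clock, so the superposition is a Poisson process of rate
\begin{equation}
  |\cE_t| \;=\; \sumko E_t(k).
\end{equation}
Since every such firing contributes exactly $+1$ to $S_t$, the predictable compensator of $S_t$ is $\int_0^t \sumko E_s(k)\,ds$. This is the standard Dynkin formula for the counting process associated to a finite collection of independent exponential clocks (it can also be verified directly: conditional on the state at time $t$, we have $\E[S_{t+h}-S_t \mid \cF_t] = h \sumko E_t(k) + o(h)$ as $h\downto 0$).

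The only subtlety is the distinction between martingale and local martingale. However, all quantities under consideration are bounded on $[0,t]$: the process $S_t$ is bounded by $n$, and $\sumko E_s(k)\le n$ for every $s$, so $\int_0^t \sumko E_s(k)\,ds \le nt$. Hence the compensated process
\begin{equation}
  M_t \;:=\; S_t - \int_0^t \sumko E_s(k)\,ds
\end{equation}
is bounded on each finite interval, and is therefore a genuine martingale in the sense required by the definition of drift given in the lemma's preamble. This yields the claim with no further calculation; I do not foresee a significant obstacle, as the lemma is essentially a direct translation of the jump rates into the language of compensators.
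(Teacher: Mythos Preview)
Your proof is correct and follows exactly the same reasoning as the paper's one-line argument: $S_t$ jumps by $1$ precisely when the clock at an empty vertex fires, and since each of the $\sumko E_t(k)$ empty vertices carries an independent rate-$1$ clock, the total jump rate is $\sumko E_t(k)$. You have simply spelled out in more detail (the compensator language, the boundedness check for martingale versus local martingale) what the paper dismisses as ``immediate.''
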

\begin{proof}
  This is immediate, since $S_t$ increases by 1 each time the clock at an empty vertex
goes off, and they all go off with rate 1.
\end{proof}

\begin{lemma}\label{LUdrift}
$  U_t$ has drift
  \begin{equation}
	-\sumk k E_t(k) \Bigpar{2-\frac{k-1}{U_t-1}}.
  \end{equation}
\end{lemma}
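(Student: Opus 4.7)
The plan is to compute the drift of $U_t$ by identifying all jumps and their expected sizes. The only way $U_t$ changes is when a clock rings at some empty vertex; since each empty vertex carries an independent rate-$1$ exponential clock, rings at vertices of degree $k$ occur at total rate $E_t(k)$. Consequently the drift of $U_t$ equals $\sum_k E_t(k)\,\E[\Delta U_t \mid \text{a degree-}k\text{ empty vertex fires}]$, and it suffices to compute the inner conditional expectation.

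Suppose the clock at $v \in \cE_t(k)$ fires. All $k$ of $v$'s half-edges are then paired into the pool of $U_t$ currently unpaired half-edges. Let $\ell$ be the (random) number of loops created at $v$, i.e.\ pairs of $v$'s own half-edges matched to each other. These loops absorb $2\ell$ half-edges of $v$, and the remaining $k-2\ell$ half-edges of $v$ are each paired with an external half-edge, so altogether $k-\ell$ new edges are created, and $U_t$ drops by $2(k-\ell) = 2k - 2\ell$. To compute $\E[\ell]$, I would invoke the standard fact that, conditionally on the history up to the firing time, the pairing of the $U_t$ currently unpaired half-edges is uniformly distributed (this is inherited from the exchangeability built into the configuration model). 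In a uniform random matching of $U_t$ items, any two prescribed items are matched together with probability $1/(U_t-1)$, so by linearity of expectation across the $\binom{k}{2}$ pairs of $v$'s half-edges, $\E[\ell] = k(k-1)/(2(U_t-1))$. Thus the conditional expected decrement in $U_t$ is $2k - k(k-1)/(U_t-1) = k\bigl(2-(k-1)/(U_t-1)\bigr)$.

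Plugging this into the drift formula and negating (since $U_t$ decreases at every such jump) yields the claimed expression $-\sum_k k E_t(k)\bigl(2 - (k-1)/(U_t-1)\bigr)$. I do not foresee any substantial obstacle; the only step requiring any care is the uniformity of the remaining pairing at each jump time, which is a standard exchangeability property of the sequential configuration model. The argument is structurally parallel to the one for $S_t$ in Lemma~\ref{LSdrift}, with the only extra content being the loop correction via the term $\binom{k}{2}/(U_t-1)$.
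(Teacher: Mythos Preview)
Your argument is correct and essentially identical to the paper's proof: both compute the jump in $U_t$ when a degree-$k$ empty vertex fires as $2k-2\ell$ with $\ell$ the number of loops, evaluate $\E\ell=\binom{k}{2}/(U_t-1)$ by the pairwise matching probability in a uniform matching, and then sum over $k$ weighted by the rate $E_t(k)$. The paper writes the decrement as $k+(k-2L)$ (half-edges at $v$ plus external partners) rather than $2(k-\ell)$, but this is the same count.
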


\begin{proof}
  When the clock at a vertex of degree $k$ goes off, then the number of free half-edges decreases by
  $k+(k-2L)=2k-2L$, where $L$ is the number of loops created at the vertex.
  We have, conditionally on $U_t$,
\begin{equation}
  \E L = \binom k2 \frac{1}{U_t-1}
\end{equation}
and thus
\begin{equation}
  \begin{split}
  \E\#\set{\text{removed half-edges}}
&=2k-2\E L
=2k-\frac{k(k-1)}{U_t-1}	
\\&
=k\Bigpar{2-\frac{k-1}{U_t-1}}.
  \end{split}
\end{equation}
Now multiply by $E_t(k)$ and sum over $k$.
\end{proof}

Consider two distinct vertices $v$ and $w$,
of degrees $j$ and $k$, respectively,
in a configuration model with $u$ half-edges.
The probability that $v$ and $w$ are connected by at least one edge depends
only on $j$, $k$ and $u$; denote it by $p_{jk}(u)$.

\begin{lemma}\label{LEdrift}
  $E_t(k)$ has drift
  \begin{equation}
	-E_t(k)- \sumj p_{jk}(U_t) E_t(j)\bigpar{E_t(k)-\gd_{jk}}.
  \end{equation}
\end{lemma}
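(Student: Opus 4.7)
The plan is to decompose changes in $E_t(k)$ into the two mechanisms by which an empty degree-$k$ vertex can leave $\cE_t$: either its own clock fires and it moves to $\cS_t$, or some other empty vertex's clock fires and the degree-$k$ vertex is blocked. Since $E_t(k)$ is nonincreasing, these are the only contributions to compute. For the first mechanism, the $E_t(k)$ rate-one clocks at vertices in $\cE_t(k)$ each contribute $-1$ to the drift when they fire, giving a total contribution of $-E_t(k)$.

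For the blocking mechanism, fix $j \ge 1$ and $v \in \cE_t(j)$. When $v$'s clock fires, its $j$ half-edges are matched sequentially into the pool of $U_t$ currently unpaired half-edges (including $v$'s own). For each other $w \in \cE_t(k)\setminus\{v\}$, the event that $w$ becomes blocked is exactly the event that the partial matching generated at $v$ includes at least one edge joining $v$ to $w$. This event is measurable with respect to the pairings of $v$'s half-edges alone, and by the exchangeability of a uniformly random matching of the $U_t$ unpaired half-edges, its probability is exactly the probability that $v$ and $w$ share at least one edge in a configuration model on those $U_t$ half-edges; by the definition preceding the lemma, this equals $p_{jk}(U_t)$. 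There are $E_t(k)-\gd_{jk}$ eligible targets $w$ (the Kronecker delta excluding $v$ itself when $j=k$), so the conditional expected blocking per firing is $p_{jk}(U_t)\bigpar{E_t(k)-\gd_{jk}}$. Multiplying by $E_t(j)$ (the combined firing rate at degree-$j$ empty vertices) and summing over $j\ge 1$ gives the blocking contribution $-\sumj p_{jk}(U_t) E_t(j)\bigpar{E_t(k)-\gd_{jk}}$. Adding both contributions produces the stated drift; since jump sizes are deterministically bounded on each bounded time interval, the compensator is a true martingale, consistent with the convention at the start of the section.

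The main obstacle, though conceptually mild, lies in justifying the identification of the conditional blocking probability with $p_{jk}(U_t)$. One must observe that, conditional on the process's history up to time $t$, the half-edges still unpaired at time $t$ form a uniformly random partial configuration, so that the pairings revealed when $v$'s clock fires can be coupled with those of a full configuration model on the $U_t$ unpaired half-edges without altering the event ``$v$ is joined to $w$''. This is a standard use of the principle that the configuration model matching can be generated by revealing pairings in any adaptive order; once it is in place, the rest of the argument is pure bookkeeping.
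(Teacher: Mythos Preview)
Your proof is correct and follows essentially the same approach as the paper: both compute the expected decrease in $E_t(k)$ per firing of an empty vertex of degree $j$ and sum over $j$, with your organisation separating the self-removal term $-E_t(k)$ from the blocking term, while the paper instead splits into the cases $j\neq k$ and $j=k$ before combining. Your additional paragraph justifying the identification of the blocking probability with $p_{jk}(U_t)$ via exchangeability of the residual configuration is a more explicit version of the paper's parenthetical ``since we may ignore the half-edges already paired''.
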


\begin{proof}
When the clock at a vertex in $\cE_t(j)$ (i.e., an empty
vertex of degree $j$) goes off, that vertex is removed from the set
$\cE_t(j)$,
which reduces $E_t(j)$ by 1.
Furthermore, when the clock at a vertex in $\cE_t(j)$ goes off, each empty vertex of degree
$k$ is joined to it with probability $p_{jk}(U_t)$ (since we may ignore the
half-edges already paired). Hence the expected total decrease of $E_t(k)$ when a
vertex in $\cE_t(j)$ goes off is
$p_{jk}(U_t)E_t(k)$ when $j\neq k$ and
$1+p_{jk}(U_t)(E_t(k)-1)$ when $j= k$. Now multiply by $E_t(j)$ and sum over
$j$.
\end{proof}

Let $(k)_m$ denote the falling factorials.

\begin{lemma}\label{Lpjk}
We have, with the sums really finite and extending only to $j\land k$,
\begin{equation}\label{pjk}
	\begin{split}
	p_{jk}(u)
&
= \summ\frac{(-1)^{m-1}}{m!} \frac{(j)_m(k)_m}{(u-1)(u-3)\dotsm (u-2m+1)}
\\&
= \summ\frac{(-1)^{m-1}}{m!} \frac{(j)_m(k)_m}{2^m((u-1)/2)_m}	  .
	\end{split}
  \end{equation}
Furthermore,
\begin{equation}
  \label{bonf}
\frac{jk}{u-1}
\ge p_{jk}(u)
\ge \frac{jk}{u-1} -\frac{j(j-1)k(k-1)}{2(u-1)(u-3)}.
\end{equation}
\end{lemma}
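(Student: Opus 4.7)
The plan is to compute $p_{jk}(u)$ by inclusion–exclusion on the number of edges between $v$ and $w$. Let $N$ denote the (random) number of edges joining $v$ and $w$ in the configuration-model matching of the $u$ half-edges. Then $p_{jk}(u) = \Pr(N \ge 1)$, and I will use the standard identity
\begin{equation}
\Pr(N \ge 1) = \summ (-1)^{m-1}\,\E\binom{N}{m}.
\end{equation}

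The main computation is to evaluate $\E\binom{N}{m}$. The quantity $\binom{N}{m}$ counts the number of ways to choose an ordered set of $m$ edges between $v$ and $w$, so by linearity
\begin{equation}
\E\binom{N}{m} = \frac{1}{m!}\cdot(j)_m(k)_m\cdot q_m(u),
\end{equation}
where $(j)_m(k)_m$ counts the ways to pick $m$ half-edges from $v$ and $m$ half-edges from $w$ and pair them off in an ordered way, and $q_m(u)$ is the probability that $m$ specified disjoint pairs all appear in the uniformly random perfect matching of the $u$ half-edges. A standard count gives $q_m(u) = (u-2m-1)!!/(u-1)!! = 1/\bigl((u-1)(u-3)\dotsm(u-2m+1)\bigr)$. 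Substituting into the inclusion–exclusion formula yields the first expression in \eqref{pjk}; the second follows by pulling out a factor of $2$ from each of the $m$ terms in the denominator to rewrite it as $2^m ((u-1)/2)_m$. Note the sum is really finite since $(j)_m=0$ for $m>j$ and $(k)_m=0$ for $m>k$.

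For the bounds in \eqref{bonf}, I would appeal to Bonferroni's inequalities: keeping only the first term of the alternating sum gives the upper bound
\begin{equation}
p_{jk}(u) \le \E N = \E\binom{N}{1} = \frac{jk}{u-1},
\end{equation}
while keeping the first two terms gives the lower bound
\begin{equation}
p_{jk}(u) \ge \E\binom{N}{1} - \E\binom{N}{2} = \frac{jk}{u-1} - \frac{j(j-1)k(k-1)}{2(u-1)(u-3)}.
\end{equation}

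I do not anticipate any real obstacles: the only thing to be careful about is justifying that the standard Bonferroni bounds apply, which only requires that the terms $\E\binom{N}{m}$ be well defined and that $N$ take integer values, both obvious here. The slight subtlety is that there is no range restriction on the sign alternation because the sum is finite, so Bonferroni truncations go through verbatim.
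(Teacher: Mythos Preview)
Your proposal is correct and follows essentially the same route as the paper: both compute $\Pr(X\ge1)$ via the inclusion--exclusion identity $\Pr(X\ge1)=\summ(-1)^{m-1}\E\binom{X}{m}$, evaluate the factorial moments using the configuration-model pairing probability $1/\bigl((u-1)(u-3)\dotsm(u-2m+1)\bigr)$, and then invoke Bonferroni for the truncation bounds. One small slip in your write-up: $\binom{N}{m}$ counts \emph{unordered} $m$-subsets of edges, not ordered ones; your formula $\E\binom{N}{m}=\frac{1}{m!}(j)_m(k)_m\,q_m(u)$ is nonetheless correct, since $(j)_m(k)_m$ counts ordered choices of half-edges with a matching, overcounting each unordered edge-set exactly $m!$ times.
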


\begin{proof}
  The formula \eqref{pjk} is obtained by the inclusion-exclusion principle applied to
  the variable $X$ defined as the number of edges between the two given
  vertices $v$ and $w$; or equivalently, applied to the family of events
  that a given pair of half-edges at $v$ and $w$ are joined.
We have, for any $m\ge0$,
  \begin{equation}
	\E (X)_m = (j)_m(k)_m\frac{1}{(u-1)(u-3)\dotsm(u-2m+1)},
  \end{equation}
and \eqref{pjk} is given by the standard formula
(see \citet[IV.1]{FellerI}, in an equivalent form)
\begin{equation}
\Pr(X\ge 1)
=\summ (-1)^{m-1}\E \binom Xm
=\summ \frac{(-1)^{m-1}}{m!}\E (X)_m  .
\end{equation}

Similarly, the inequalities \eqref{bonf} are instances of the Bonferroni
inequalities (see \citet[IV.5(c)]{FellerI}).
\end{proof}

\begin{remark}
  The sum in \eqref{pjk} is (a minor modification of) a hypergeometric sum;
in terms of the hypergeometric function ${}_2F_1$, we have
\begin{equation}
  p_{jk}(u) = 1- {}_2F_1\bigpar{-j,-k;\tfrac{1-u}2;\tfrac12}.
\end{equation}
\end{remark}

\section{Analysis: proof of Theorem~\ref{thm.main}}

\label{sec:analysis}

First, recall that $\sum_k k n_k/n \to \lambda$. Since it suffices to
consider large $n$, we may without loss of generality assume that
\begin{equation}
  \label{2lambda}
\sum_k kn_k \le 2\lambda n.
\end{equation}

Next,
by Lemmas \ref{LUdrift} and \ref{LEdrift},
we can write
\begin{equation}\label{mt}
U_t = U_0 -\int_0^t \sumk k E_s(k) \Bigpar{2-\frac{k-1}{U_s-1}}\,ds + M_t,
\end{equation}
and, for each $k \in \Z^+$,
\begin{equation}\label{mtk}
E_t(k) = E_0(k)-\int_0^t E_s(k)\,ds- \int_0^t \sumj p_{jk}(U_s) E_s(j)\bigpar{E_s(k)-\gd_{jk}}\,ds +M_t(k),
\end{equation}
where $M_t$ and each $M_t(k)$ is a martingale.

Dividing by $n$,
\begin{equation}
\label{eq-u}
\frac{U_t}{n} = \frac{U_0}{n}-\int_0^t \sumk \frac{k E_s(k)}{n} \Bigpar{2-\frac{k-1}{U_s-1}}\,ds + \frac{M_t}{n},
\end{equation}
and, for each $k$,
\begin{equation}
\label{eq-ek}
\frac{E_t(k)}{n} = \frac{E_0(k)}{n}-\int_0^t \frac{E_s(k)}{n}\,ds- \int_0^t \sumj p_{jk}(U_s) \frac{E_s(j)}{n}\bigpar{E_s(k)-\gd_{jk}}\,ds +\frac{M_t(k)}{n}.
\end{equation}
The martingale $M_t$ has by \eqref{mt} locally finite variation, and
hence its quadratic variation
$[M]_t$ is given by
\begin{equation}
  \label{qM}
[M]_t
= \sum_{0\le s\le t} (\Delta M_s)^2
= \sum_{0\le s\le t} (\Delta U_s)^2
\le \sum_{s \ge 0} (\Delta U_s)^2 \le \sum_j (2j)^2 n_j = o(n^2),
\end{equation}
since $\sumk k n_k/n$ is uniformly summable.
(In fact, $[M]_t = O(n)$ if the second moment of the degree distribution is
uniformly bounded.)
Likewise, for each $k$,
\begin{equation}\label{qMk}
[M(k)]_t \le \sum_{s \ge 0} (\Delta E_s(k))^2\le \sum_j (j+1)^2 n_j = o(n^2),
\end{equation}
with the bound being $O(n)$ if the second moment of the degree distribution
is uniformly bounded.
Doob's inequality then gives
$\sup_{t \ge 0} \abs{M_{t}} = \op(n)$ and, for each $k$,
$\sup_{t \ge 0}\abs{M_{t}(k)} = \op(n)$.

Now the integrand in~(\ref{eq-u}) is bounded above by
$2\sumk kn_k/n$, which is less than $4 \lambda$ by \eqref{2lambda}.
Also, the integrand in the first integral in~(\ref{eq-ek}) is
bounded above by 1, and the integrand in the second integral is bounded
above by $4 \lambda$, since
$p_{jk} (U_t) \le jk/(U_t-1)\le 2jk/U_t$ and $\sumj jE_s(j)\le U_s$
(there are at least as many
free half-edges, as there are free half-edges at empty vertices),

It follows that $(U_t- M_t)/n$, $n \ge 1$, is a uniformly Lipschitz family, and
it is also uniformly bounded on each finite interval $[0,t_0]$. Likewise,
for each $k$, $(E_t(k) - M_t(k))/n$, $n \ge 1$, is a uniformly Lipschitz
family, and
it is also uniformly bounded on each finite interval $[0,t_0]$.
Thus, the Arzela--Ascoli theorem implies that each of the above families is tight in $C[0,t_0]$ for
any $t_0> 0$ \cite[Theorems A2.1 and  16.5]{Kallenberg},
and so also in $C[0,\infty)$ \cite[Theorem 16.6]{Kallenberg}.
This then implies that, for each of the above processes, there exists a
subsequence along which the process converges in distribution in
$C[0,\infty)$.
Since there are countably many processes, we can find a common subsequence
where
\begin{equation}\label{uemconv}
 \frac{U_t - M_t}n\to u_t \qquad \text{and each}\qquad
\frac{E_t(k)-M_t(k)}n\to e_t(k)
\end{equation}
in distribution
in $C[0,\infty)$, for some random continuous functions $u_t$ and $e_t(k)$.
By the Skorokhod coupling lemma \cite[Theorem 4.30]{Kallenberg}
we may assume that the limits in \eqref{uemconv} hold almost surely
(in $C[0,\infty)$, i.e., uniformly on compact sets),
and also that
$\sup_{t \ge 0} \abs{M_{t}}/n\asto 0$
and
$\sup_{t \ge 0} \abs{M_{t}(k)}/n\asto 0$.
Hence, along the subsequence, a.s.
\begin{equation}\label{ueconv}
 \frac{U_t }n\to u_t \qquad \text{and each}\qquad
\frac{E_t(k)}n\to e_t(k)
\end{equation}
uniformly on compact sets,
with continuous limits $u_t$ and $e_t(k)$,  $k =0,1,2, \ldots$.
Since $U_t \ge
0$ and $E_t(k) \ge 0$,  we must have $u_t \ge 0$ and $e_t(k)
\ge 0$ for all $t$ and $k$.
Clearly, $u_0 = \lambda$ and $e_0(k) = p_k$.

Next note that
we have, by \eqref{mt}, for $0\le r<t<\infty$,
as $\sum_k k E_s (k) \le U_s$,
\begin{equation}
  \begin{split}
\frac{U_t}{n} -\frac{U_r}n
& \ge -2\int_r^t \sumk \frac{k E_s(k)}{n}\,ds + \frac{M_t}{n}-\frac{M_r}{n}
\\&
\ge - 2 \int_r^t \frac{U_s}n \,ds + \frac{M_t}{n}-\frac{M_r}{n}.	
  \end{split}
\end{equation}
This, and the fact that $U_t$ is non-increasing, then implies that
\begin{equation}\label{Deltau}
0\ge
  u_t-u_r \ge  - 2 \int_r^t u_s \,ds.
\end{equation}
This (or
\eqref{uemconv} and the argument preceding it)
shows that $u_t$ is a Lipschitz function.
Thus $u_t$ is differentiable a.e., and \eqref{Deltau} implies
$\frac{d}{dt}u_t\ge-2u_t$  and thus $\frac{d}{dt}(e^{2t} u_t)\ge0$ a.e.
A Lipschitz function is absolutely continuous and thus the integral of its
derivative; hence it follows that $e^{2t}u_t$ is non-decreasing
and hence that $u_t \ge e^{-2t} u_0 = \lambda e^{-2t} > 0$ for all $t$.

In particular, \eqref{ueconv} implies that  along the subsequence, a.s.\
$U_t\to\infty$ for every fixed $t$.

Thus,
a.s., along the subsequence,
$(k-1)/(U_s-1) \to 0$
for every fixed $k$ and $s <\infty$,
and so, using \eqref{ueconv},
\begin{equation}
  \label{marthie}
a_s(k):= \frac{k E_s (k)}{n} \Big ( 2 - \frac{k-1}{U_s-1}\Big ) \to 2 k e_s (k).
\end{equation}
Furthermore,
\begin{equation}\label{malby}
a_s (k) \le \frac{2 k E_0(k)}{n} = \frac{2kn_k}{n}.
\end{equation}
By assumption, $kn_k/n$ is uniformly summable, and by \eqref{malby}
so is  $a_s(k)$. Hence \eqref{marthie} yields
\begin{equation}\label{elf1}
  A_s := \sumk a_s (k) \to 2 \sumk k e_s (k).
\end{equation}
Furthermore, by \eqref{malby} and \eqref{2lambda},
for all $s\ge0$,
\begin{equation}
A_s \le \frac{2 \sumk k n_k}{n} \le 4\lambda.
\end{equation}
Hence,
by dominated convergence, for any $t<\infty$, a.s.
\begin{equation}
\int_0^t A_s \,ds \to 2 \int_0^t \sumk k e_s(k) \,ds.
\end{equation}
Note that the integral in \eqref{eq-u} is $\int_0^t A_s\,ds$.
Hence, taking the limit in \eqref{eq-u} as \ntoo{} along the subsequence
yields, using \eqref{ueconv} and $M_t/n\to0$ a.s.,
\begin{equation}
\label{eq.int.u}
u_t = \lambda - 2\int_0^t \sumk k e_s(k) \, ds.
\end{equation}

Now, fix $k \in \Z^+$, and, for each $j \in \Z^+$, let
\begin{equation}
b_s (j) = p_{jk} (U_s) \frac{E_s (j)}{n} (E_s (k) - \delta_{jk}).
\end{equation}
By Lemma \ref{Lpjk} and \eqref{ueconv}, a.s.,
for each fixed $s\ge0$ and $j\in\Z^+$,
\begin{equation}
n p_{jk}(U_s)
= \frac{njk}{U_s-1} \Bigpar{1 + O \Bigpar{\frac{jk}{U_s-1} }}
\to \frac{jk}{u_s}
\end{equation}
and
\begin{equation}
\frac{E_s(j)}{n} \frac{E_s(k) - \delta_{jk}}{n}
\to e_s (j) e_s (k).
\end{equation}
Hence, a.s., for each fixed $s\ge0$ and $j\in\Z^+$,
\begin{equation}
  \label{bjlim}
b_s(j)\to \frac{jk}{u_s}e_s(j)e_s(k).
\end{equation}
Furthermore,
if $U_s=0$ then $E_s(j)=0$ and $b_s(j)=0$, and if $U_s>0$ then
\begin{equation}\label{bjbound}
0\le b_s (j) \le \frac{jk}{U_s -1} \frac{E_s(j)E_s(k)}{n}
\le  2\frac{kE_s(k)}{U_s} \frac{j E_s(j)}{n}
\le 2\frac{j n_j}n.
\end{equation}
Thus $b_s (j)$ is uniformly summable and
\eqref{bjlim} yields
\begin{equation}\label{elf2}
\sum_j b_s (j) \to k e_s (k)\sum_j \frac{j e_s(j)}{u_s}.
\end{equation}
Moreover, by \eqref{bjbound} and \eqref{2lambda}, for every $s$,
\begin{equation}
\sum_j b_s (j)
\le \frac{2\sum_j j n_j}{n} \le 4 \lambda.
\end{equation}
Consequently, by dominated convergence, for any $t<\infty$,
\begin{equation}
\int_0^t \sum_j b_s (j) \,ds
\to \int_0^t k e_s(k) \sum_j \frac{j e_s(j)}{u_s} \,ds \quad \mbox{a.s.}
\end{equation}

The second integral in \eqref{eq-ek} is $\int_0^t\sum_j b_s(j)\,ds$, and for
the first integral we have directly by \eqref{ueconv} and dominated
convergence
\begin{equation}
  \int_0^t\frac{E_s(k)}n\,ds\to\int_0^t e_s(k)\,ds \quad \mbox{a.s.}
\end{equation}
Since also $M_t(k)/n\to0$ a.s., we thus see from \eqref{eq-ek} that
\begin{equation}
\label{eq.int.e}
e_t(k) = p_k-\int_0^t e_s(k)\,ds- \int_0^t k e_s(k)\frac{\sumj j e_s(j)}{u_t}\,ds.
\end{equation}

In other words, we have shown that the subsequential \as{} limit
$(u_t,e_t(k): k =0,1, \ldots)$ of $(U_t/n, E_t(k)/n: k=0,1,\ldots)$
\as{} must
satisfy the following system of equations:
\begin{align}
u_t & =  \lambda - 2\int_0^t \sumk k e_s(k)\,ds \label{eq.int-u}
\\
e_t(k) & =  p_k-\int_0^t e_s(k)\,ds
- \int_0^t k e_s(k)\frac{\sumj j e_s(j)}{u_s}\,ds,
\quad k \in \Z^+. \label{eq.int-e}
\end{align}
Below, we will show that the
equations~\eqref{eq.int-u}--\eqref{eq.int-e} have a unique
solution $(u_t,e_t(k): k =0,1, \ldots)$.
This means that
$u_t$ and $e_t(k)$, which a priori are random, in fact are deterministic.
Moreover,
since the limits $u_t$ and $e_t(k)$ are continuous,
the limits uniformly on compact sets in \eqref{ueconv}
are equivalent to convergence in the Skorokhod space $D[0,\infty)$.
Consequently, the argument above shows that each  subsequence
of $(U_t/n, E_t(k)/n: k=0,1,\ldots)$
has a subsequence which converges in distribution
in the Skorokhod topology, and that each convergent subsequence
converges to the same $(u_t,e_t(k): k =0,1,\ldots)$.
This implies that
the whole sequence $(U_t/n, E_t(k)/n: k=0,1,\ldots)$
must in fact converge to
$(u_t,e_t(k): k =0,1, \ldots)$
in distribution in the Skorokhod topology.
Since the limit is deterministic, the convergence holds in probability.

We proceed to solve  \eqref{eq.int-u}--\eqref{eq.int-e}.
(This is a system of equations of deterministic functions; there is no randomness
in this part of the proof.)
We use also the facts $e_t(k)\ge0$ and $u_t>0$ established above.

First of all, each $e_s(k)$ is continuous.
Furthermore, \eg{} by \eqref{eq.int-e},
$e_s(k)$ is bounded by $e_0(k)=p_k$ with
$\sum_k kp_k<\infty$.
Hence the sum $\sum_k k e_s(k)$ converges uniformly and thus this sum is
continuous. Since also $u_s$ is continuous and $u_s>0$, the integrands in
the integrals in \eqref{eq.int-u}--\eqref{eq.int-e} are continuous.
Consequently, the functions $u_t$ and $e_t(k)$ are continuously
differentiable and the integral equations
\eqref{eq.int-u}--\eqref{eq.int-e} can be written as a system of
differential equations
\begin{align}
\frac{du_t}{dt} & =  - 2 \sumk k e_t(k) \label{eq.diff-u}
\\
\frac{de_t(k)}{dt} & =  - e_t(k)
-  k e_t(k)\frac{\sumj j e_t(j)}{u_t}, \quad k \in \Z^+. \label{eq.diff-e}
\end{align}
with the initial conditions $u_0=\lambda$ and $e_0(k)=p_k$.

Note that the system \eqref{eq.diff-u}--\eqref{eq.diff-e}
is infinite, and it is not a priori obvious that it
has a solution, or that the
solution is unique. The system is not obviously Lipschitz with respect
to any of the usual norms on sequence spaces.
Fortunately, it is possible to decouple the
system via a change of variables and a time-change, leaving us with an
explicit solution in terms of the new variables.

We make the change of variables
\begin{equation}
  \label{he}
h_t(j) = e^t e_t(j),
\end{equation}
for each~$j$.  Note
that $\displaystyle \frac{dh_t(j)}{dt} = e^t \frac{de_t(j)}{dt} + e^t
e_t(j)$, so, from
\eqref{eq.diff-e}
we obtain
\begin{equation} \label{eq.htj}
\frac{dh_t(k)}{dt} = - k h_t(k) \frac{\sum_j j e_t(j)}{u_t}.
\end{equation}
Now we rescale time by introducing a new time variable $\tau=\tau_t$ such that
\begin{equation} \label{eq.time-change}
\frac{d\tau}{dt} = \frac{\sum_j j e_t(j)}{u_t},
\end{equation}
with $\tau_0 = 0$. This is well defined as $u_t > 0$ for all $t$.
Note that $\sum_jje_t(j)\le\sum_jjp_j=\lambda$; hence \eqref{eq.diff-e}
implies that for any $T>0$ and $t\in[0,T]$,
\begin{equation}\label{ebb}
\frac{de_t(k)}{dt}  \ge  - \Bigpar{1+\frac{\lambda}{u_T}k}e_t(k)
\end{equation}
and thus $e_t(k)\ge p_k \exp\bigset{-\bigpar{1+\xfrac{k\lambda}{u_T}}t}$ for
$t\in[0,T]$. In particular, if $p_k>0$, then $e_t(k)>0$ for all $t\ge0$.
Hence,
$\sumk k e_t (k) > 0$ for all $t$, and thus $\xfrac{d\tau}{dt} > 0$ for all
$t$, so $\tau$ is a strictly increasing function of~$t$.
Consequently, the mapping $t\mapsto\tau_t$ is a bijection of $[0,\infty)$
  onto $[0,\tau_\infty)$ for some
$\tau_\infty \in (0,\infty]$;
furthermore, this map and its inverse are both continuously differentiable.

We can thus regard the
functions $u_t$, $e_t(j)$ and $h_t(j)$ as functions of
$\tau\in[0,\tau_\infty)$; we denote
these simply by $u_\tau$, $e_\tau(j)$ and $h_\tau(j)$.
With this convention, we obtain from \eqref{eq.diff-u}, \eqref{eq.htj}
and \eqref{eq.time-change}
\begin{align} 
\frac{du_{\tau}}{d\tau}
& =  \frac{du}{dt} \frac{dt}{d \tau}
= - 2 u_\tau, \label{eq.diff-tau-u}\\
\frac{dh_{\tau}(j)}{d\tau}
& =\frac{dh(j)}{dt} \frac{dt}{d \tau}
=  - j h_\tau(j), \quad j \in \Z^+, \label{eq.diff-tau-h}
\end{align}
subject to initial conditions $u_0 = \lambda$ and $h_0(j) = p_j$.
The system~\eqref{eq.diff-tau-u}--\eqref{eq.diff-tau-h} has the obvious
unique solution
\begin{align}
u_\tau & =  \lambda e^{-2\tau} \label{eq.sol-u} \\
\quad h_\tau(j) & =  p_j e^{-j \tau}, \quad j \in \Z^+. \label{eq.sol-h}
\end{align}
Substituting \eqref{he} and \eqref{eq.sol-u}--\eqref{eq.sol-h}
into \eqref{eq.time-change}
yields
\begin{equation}
\frac{d\tau}{dt} = e^{-t} \frac{\sum_k k p_k e^{-k\tau}}{\lambda e^{-2\tau}},
\end{equation}
and separating the variables and using the boundary conditions gives
\begin{equation} \label{eq.time-change2}
1 - e^{-t} = \int_0^t e^{-s} \, ds
= \int_0^{\tau_t} \frac{\lambda e^{-2\sigma}}{\sum_k k p_k e^{-k\sigma}}
\, d\sigma.
\end{equation}
Since the integrand is positive, this determines $\tau_t$ uniquely for every
$t\in[0,\infty)$, and thus $u_t$, $h_t(j)$ and $e_t(j)$
are determined by  \eqref{eq.sol-u}--\eqref{eq.sol-h} and \eqref{he}.
This completes the proof that the equations
\eqref{eq.int-u}--\eqref{eq.int-e} have a unique solution, at least assuming
$e_t(k)\ge0$ and $u_t>0$, which any subsequential limit of our process must
satisfy, as shown above.

Furthermore, letting \ttoo{} in \eqref{eq.time-change2},
\begin{equation} \label{eq.tauinfty}
\int_0^{\tau_\infty} \frac{\lambda e^{-2\sigma}}{\sum_k k p_k e^{-k\sigma}} \, d\sigma = 1,
\end{equation}
so $\tau_\infty$ is the value given by \eqref{eq.tauinfty1}.
As we saw already in Section~\ref{sec:intro},  this
determines
$\tau_\infty$ uniquely.

It is time to consider the variable that we really are interested in,
viz.~$S_t$.
By Lemma \ref{LSdrift} (and $S_0=0$),
we have, in analogy with \eqref{mt}--\eqref{mtk},
\begin{equation}\label{mts}
S_t = \int_0^t \sumko E_s(k)\,ds + \tM_t,
\end{equation}
for some martingale $\tM_t$.
All jumps $\Delta S_t$ are equal to 1, and it follows as in \eqref{qM}
and \eqref{qMk} that
\begin{equation}
  [\tM]_t\le \sum_{s\ge0}(\Delta S_s)^2 \le n
\end{equation}
and Doob's inequality  gives
$\sup_{t \ge 0} \abs{\tM_{t}} = \op(n)$.
Furthermore,
we have shown that \eqref{ueconv} holds (for the full sequence) with
convergence in probability in $D[0,\infty)$, and again we may, by the
  Skorokhod coupling lemma, assume that the convergence holds \as{} in
  $D[0,\infty)$, and thus uniformly on compact intervals.
Since
$E_s(k)/n\le n_k/n$ and $\sumko n_k/n$ is uniformly summable (since
$\sum_k kn_k/n$ is), it then follows that
\begin{equation}\label{sumElim}
  \sumko\frac{E_s(k)}n \to \sumko e_s(k) \quad \mbox{a.s.}
\end{equation}
for every $s$. Furthermore, trivially $\sumko E_s(k)/n\le1$;
hence \eqref{mts} and \eqref{sumElim} yield, by dominated convergence, a.s.
\begin{equation}\label{lims}
\frac{S_t}n \to s_t:=\int_0^t \sumko e_s(k)\,ds
\end{equation}
for any $t<\infty$.
We want to extend this to $t=\infty$.

Since $S_t\le n$, we see from \eqref{lims} that $s_t\le1$ for every $t<\infty$;
hence
\begin{equation}
  \label{soo}
s_\infty = \lim_{t\to \infty} s_t = \int_0^\infty \sumko e_t(k) \, dt
\end{equation}
is finite. This can also be seen directly since $\sumko e_t(k)\le e^{-t}$
by \eqref{he} and \eqref{eq.sol-h}.
In particular, $\sumko e_t(k)\to0$ as \ttoo.

Let $\eps>0$. Then there exists $T=T(\eps)<\infty$ such that
$s_\infty-s_{T}<\eps$ and  $\sumko e_{T}(k)< \eps$.
Now,
\begin{equation}\label{tri}
\lrabs{\frac{S_{\infty}}{n} - s_{\infty}}
\le \lrabs{\frac{S_{\infty}-S_{T}}{n}}
+ \lrabs{\frac{S_{T}}{n} - s_{T}}
+ \bigabs{s_{T} - s_{\infty}}.
\end{equation}
The last term in \eqref{tri} is less than $\eps$ by our choice of $T$.
Furthermore, by \eqref{lims}, the term $|S_T/n-s_T|$ tends to 0 a.s., and
thus in probability.
Hence, $|S_T/n-s_T|\le \eps$
w.h.p.\ (meaning, as before, with probability tending to 1 as \ntoo).
Moreover, for any $t$,
$0\le S_\infty-S_t\le \sumko E_t(k)$, and
by \eqref{sumElim},
\begin{equation}
    \sumko\frac{E_{T}(k)}n \to \sumko e_T(k)<\eps
\end{equation}
a.s.\ and thus in probability.
Hence also
$(S_\infty-S_{T})/n<\eps$ w.h.p.
Consequently, \eqref{tri} shows that $|S_\infty/n-s_\infty|<3\eps$ w.h.p.
Since $\eps>0$ is arbitrary, we have shown that
\begin{equation}\label{soox}
  \frac{S_\infty}n\to s_\infty
\end{equation}
in probability, and it remains only to show that $s_\infty$ equals the
constant in \eqref{eq.jamming1}.

In order
to obtain the jamming constant $s_\infty$, we use \eqref{soo} and
make the same change of variables $t \mapsto \tau$ as before.
Thus, using \eqref{eq.time-change},
\begin{equation}\label{soo=}
  \begin{split}
s_\infty &
= \int_0^\infty \sum_k e_t(k) \, dt
= \int_0^{\tau_\infty} \sum_k e_\tau(k) \frac{u_\tau}{\sum_k k e_\tau(k)} \,
d\tau
\\&
= \int_0^{\tau_\infty} \frac{u_\tau \sum_k h_\tau(k)}{\sum_k k h_\tau(k)} \, d\tau.	
  \end{split}
\end{equation}
Substituting our explicit expressions
\eqref{eq.sol-u} and \eqref{eq.sol-h}
for $u_\tau$ and the $h_\tau(k)$
yields
\begin{equation} \label{eq.jamming}
s_\infty = \lambda \int_0^{\tau_\infty} e^{-2\tau} \frac{\sum_k p_k e^{-k\tau}}{\sum_k k p_k e^{-k\tau}} \, d\tau,
\end{equation}
as in Theorem~\ref{thm.main}.

This completes the proof of Theorem \ref{thm.main}.
Theorem \ref{thm.degrees} follows by simple modifications of the final part
of the proof above:
The drift of $S_t(k)$ is $E_t(k)$ and it follows as in
\eqref{lims} that
\begin{equation}\label{limsk}
\frac{S_t(k)}n \to s_t(k):=\int_0^t e_s(k)\,ds
\end{equation}
for any $t<\infty$. Arguing as in \eqref{tri}--\eqref{soox},
we see that \eqref{limsk}
holds for $t=\infty$ too, and \eqref{eq.jammingk} follows in analogy with
\eqref{soo=} and \eqref{eq.jamming}.

\section{Without uniform integrability}
\label{sec:noui}

In this section, we discuss what happens when we relax the condition of
uniform integrability of the degree distribution, i.e.,
uniform summability of $kn_k/n$.
This is relevant only for the multigraph case,
since for the graph case we have to assume $\sum_k k^2 n_k=O(n)$ for our proof,
and then $kn_k/n$ is always uniformly summable.
Recall our convention that looped vertices are eligible for the independent set: for part of the range of
heavy-tailed distributions we consider here, many vertices of larger degree will have loops, so this
choice has a significant impact on the behaviour,
see Example \ref{Estar}.

We still assume that the number $n_k$ of vertices of
degree $k$ satisfies $n_k/n \to p_k$ for $k = 0,1, \ldots$, where
$p_k\ge0$, and to avoid trivialities we assume $p_0<1$,
but we allow $\sumko p_k<1$.
We denote the average degree by $\gln:=\sumk k n_k/n$
and assume that $\gln$ converges to some $\lambda\le\infty$.

We first treat the case when $\lambda<\infty$.
Then the degree distributions are tight,
so their limit $(p_k)_0^{\infty}$ is a
probability distribution; furthermore, by Fatou's lemma,
the mean $\mu = \sumk k p_k$ of this limiting degree distribution
satisfies $\mu\le\lambda<\infty$, so
the probability distribution
$(p_k)_0^{\infty}$ has a finite and non-zero mean $\mu$.
(It is possible that $\mu < \lambda$.)

The proof in Section~\ref{sec:analysis} uses uniform summability of $kn_k/n$
to show
that the quadratic variation of martingales $(M_t)$ and $(M_t(k))$ is
$o(n^2)$ in \eqref{qM} and \eqref{qMk}; furthermore, we use it to justify
taking limits under the summation sign in \eqref{elf1}, \eqref{elf2} and
\eqref{sumElim}. We give in this section an alternative (but longer)
argument for the multigraph case that does not use the uniform summability.

We let $c_1, C_1,\dots$ denote positive constants that may depend on the
collection
$\set{n_k(n)}_{n,k\ge1}$ and $(p_k)_0^\infty$, but not on $n$.
We replace \eqref{2lambda} by $\sum_k kn_k \le \CC n$. \CCdef\CCmu

\begin{lemma}
  \label{LEE}
There exists $\cc>0$ such that
for every $t\in[0,1]$,
\begin{equation}\label{lee}
  \E E_t(k) \le 2 n_k e^{-\ccx kt},\ccdef\cclee
\qquad k\in \Z^+.
\end{equation}
\end{lemma}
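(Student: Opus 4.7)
My plan is to prove the lemma via a Gr\"onwall-type differential inequality for $\phi_t(k) := \E E_t(k)$. From Lemma \ref{LEdrift}, taking expectations gives
\begin{equation*}
\phi_t'(k) = -\phi_t(k) - \E \sum_j p_{jk}(U_t) E_t(j)(E_t(k)-\gd_{jk}).
\end{equation*}
Since the blocking term is nonnegative, $\phi_t(k) \le n_k e^{-t}$ trivially, which is already the desired bound whenever $ckt\le\log 2$. For larger $k$ I need to extract an additional factor of approximately $e^{-ckt}$ from the blocking term.

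The strategy is to show that on $[0,1]$ the blocking term dominates $ck\phi_t(k)$ for some constant $c>0$ depending on the distribution. Using the Bonferroni lower bound on $p_{jk}$ from Lemma \ref{Lpjk}, namely $p_{jk}(u)\ge jk/(u-1)-j^2k^2/(2(u-1)(u-3))$, the leading contribution is $k\Psi_t/(U_t-1)$, where $\Psi_t:=\sum_j jE_t(j)$. Since $\Psi_0=U_0$, it suffices to control the growth of the defect $U_t-\Psi_t$ (the number of unpaired half-edges at blocked vertices) on $[0,1]$, so that $\Psi_t/U_t$ remains bounded below by a positive constant. This I would do via the drift of $U_t$ from Lemma \ref{LUdrift} together with the bound $U_0\le Cn$ imposed in this section, yielding $\Psi_t/U_t\ge c'$ on a high-probability good event throughout $[0,1]$.

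The main obstacle is the quadratic Bonferroni correction $k^2\sum_j j^2E_t(j)/U_t^2$, which can be large in the absence of uniform summability of $kn_k/n$. I plan to handle this by truncating the $j$-sum at some threshold $J$: for $j\le J$ we apply the Bonferroni bound directly, while for $j>J$ we drop the nonnegative contribution, at a cost of at most $k\sum_{j>J}jn_j/U_t$ in the leading term. Choosing $J$ appropriately (the choice may depend on $n$, which is the delicate point without uniform summability) makes the quadratic correction small while preserving the bulk of $\Psi_t$. Once the differential inequality $\phi_t'(k)\le -(1+ck)\phi_t(k)$ is established on $[0,1]$, Gr\"onwall gives $\phi_t(k)\le n_k e^{-(1+ck)t}\le 2n_k e^{-ckt}$, with the factor $2$ absorbing any lower-order error arising from the truncation and from the exceptional event where $\Psi_t/U_t$ is small.
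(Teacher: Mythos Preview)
Your plan has a genuine gap at the step where you control the blocking rate. First, the drift of $U_t$ from Lemma~\ref{LUdrift} tells you nothing about $\Psi_t=\sum_j jE_t(j)$ or about the defect $U_t-\Psi_t$; to track the defect you would need the drift of $\Psi_t$, which via Lemma~\ref{LEdrift} involves $\sum_j j^2 E_t(j)$, a quantity that in this section is not uniformly bounded in $n$. A single high-degree vertex becoming blocked can move $\Psi_t$ by an amount comparable to $U_t$, so concentration for the ratio is far from automatic. More seriously, even if you did establish a good event $G=\{\Psi_s/U_s\ge c'\text{ on }[0,1]\}$ with $\Pr(G^c)=o(1)$, that is not enough to close the Gr\"onwall argument: on $G^c$ you only have $E_t(k)\le n_k$, so the solved inequality carries an additive error $n_k\Pr(G^c)$. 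For \eqref{lee} to hold for all $k$ and all $t\in[0,1]$ you therefore need $\Pr(G^c)\le e^{-ckt}$, and since $n_k>0$ forces $k\le C_1n$, this means $\Pr(G^c)\le e^{-\Omega(n)}$. A drift-plus-martingale argument for $\Psi_t/U_t$, with jumps of unbounded size, will not produce such a bound; the ``factor~2'' cannot absorb an error of order $n_k\Pr(G^c)$ when $e^{-ckt}$ is exponentially small in $n$.

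The paper avoids both difficulties by decoupling via a stopping time rather than bounding a random ratio. Fix $\ell\ge1$ with $p_\ell>0$ and a small $\delta>0$, and let $\tau_t$ be the moment the $\lceil\delta tn\rceil$-th edge is created. Up to $\tau_t$ at most $2\lceil\delta n\rceil$ vertices have left the empty state, so at least $\tfrac14 p_\ell n$ empty degree-$\ell$ vertices remain; each rings at rate~$1$ and creates at least one edge, so a Poisson domination and Chernoff estimate give $\Pr(\tau_t>t)\le e^{-c_3 nt}$. Separately, at every edge creation a fixed empty vertex $v$ of degree $k$ is hit with probability $k/(U_s-1)\ge k/(C_1 n)$, whence the \emph{deterministic} bound $\Pr(v\text{ empty at }\tau_t)\le(1-k/(C_1 n))^{\delta tn}\le e^{-c_4 kt}$. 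Combining, $\E E_t(k)\le n_k(e^{-c_4kt}+e^{-c_3 nt})$, and since $k\le C_1 n$ the second term is also $\le e^{-c_5 kt}$. The point is that counting edges created, rather than controlling $\Psi_t/U_t$, makes the per-edge survival factor deterministic and reduces the probabilistic input to a single Chernoff estimate with exactly the exponential rate you need.
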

\begin{proof}
We may assume that $n$ is large.
(For any fixed $n$, $n_k>0$
only for finitely many $k$; furthermore,
 $\E E_t(k)\le e^{-t}n_k$ and
\eqref{lee} follows if $\cclee$ is small enough.)

Assume $0<t\le1$.
Fix some $\ell\ge1$ with $p_\ell>0$, and consider only $n$ that are so large
that $\frac12p_\ell < n_\ell/n < 2p_\ell$.
Let $\gd:=p_{\ell}/9$, and suppose also that $\ceil{\gd n} \le np_\ell/8$.
Define the stopping time $\tau_t$ as the time that we create the
$\ceil{\gd t n}$-th edge (by pairing a half-edge with another).
Even if there are several edges created at the same time, we regard them as
created one by one, separated by infinitesimal time intervals, and we stop
when exactly the right number is created.
Until we stop, at most $2\ceil{\gd t n}\le 2\ceil{\gd n}$ vertices have
gone off or become blocked; in particular,  at each time before we stop there are at least
$n_\ell-2\ceil{\gd n}\ge\frac{1}4p_\ell n$ empty vertices of degree $\ell$.
Each time one of these
empty vertices goes off, at least one edge is created. (In fact, at least
$\ceil{(\ell+1)/2}$ edges.) Hence, until we stop,
the times between the creations of the edges can be dominated by \iid{}
exponential variables with
mean $1/(\cc n)$ (with $\ccx=p_\ell/4>\gd$), and thus the creation of edges
dominates a Poisson
process with intensity $\ccx n$; consequently,
by a Chernoff estimate,
\begin{equation}\label{ch}
  \Pr(\tau_t >t) \le \Pr\bigpar{\Po(\ccx nt)<\gd tn}
\le e^{-\cc tn}.\ccdef{\cctau}
\end{equation}

Now consider a vertex $v$ of degree $k$. As long as $v$ is empty, it has
probability $k/(U_t-1)\ge k/(\CCmu n)$ of being blocked at each pairing of a
half-edge.
Hence,
\begin{equation}
  \Pr(v \text{ is empty at $\tau_t$})
\le \Bigpar{1-\frac{k}{\CCmu n}}^{\gd t n} \le e^{-\cc kt},\ccdef{\ccempty}
\end{equation}
and thus
\begin{equation}\label{etau}
  \E E_{\tau_t}(k) \le e^{-\ccempty kt}n_k.
\end{equation}
Consequently, using \eqref{ch} and \eqref{etau},
\begin{equation}
  \E E_{t}(k) \le e^{-\ccempty kt}n_k+\Pr(\tau_t>t)n_k
\le \bigpar{e^{-\ccempty kt}+ e^{-\cctau nt}}n_k.
\end{equation}
Since $n_k>0$ only if $k\le \CCmu n$, the result follows.
\end{proof}

\begin{lemma}
  \label{LEEx}
Let $E^*(k)$ be the number of vertices of degree $k$ that are empty when
they go off. Then
\begin{equation}
  \E E^*(k)\le \CC\frac{n_k}k. \CCdef\CCleex
\end{equation}
\end{lemma}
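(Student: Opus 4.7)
The plan is to express $\E E^*(k)$ as a time integral of $\E E_t(k)$ and then bound the integrand separately for short and long times, using Lemma~\ref{LEE} together with the trivial bound coming from the independent clocks at each vertex.

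First, a vertex of degree $k$ contributes to $E^*(k)$ precisely when it is placed in $\cS$, so $E^*(k)=S_\infty(k)$, where $S_t(k)$ denotes the number of degree-$k$ vertices in $\cS$ at time $t$. Since each empty vertex of degree $k$ is added to $\cS$ at rate $1$, the process $S_t(k)$ has drift $E_t(k)$ (by the same argument as Lemma~\ref{LSdrift}, restricted to degree $k$), and monotone convergence gives
\begin{equation}
\E E^*(k)=\int_0^\infty \E E_t(k)\,dt.
\end{equation}
On $[0,1]$, Lemma~\ref{LEE} yields directly
\begin{equation}
\int_0^1 \E E_t(k)\,dt\le\int_0^1 2n_k e^{-\cclee k t}\,dt\le\frac{2n_k}{\cclee k}.
\end{equation}

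On $[1,\infty)$ I would combine two bounds. First, $E_t(k)$ is non-increasing in $t$ since vertices only leave $\cE_t$, so Lemma~\ref{LEE} at time $1$ yields $\E E_t(k)\le 2n_k e^{-\cclee k}$ for all $t\ge 1$. Second, each degree-$k$ vertex carries an independent $\Exp(1)$ clock, independent of the pairings, and cannot be empty at $t$ unless its clock has not yet rung, which has probability $e^{-t}$; hence $\E E_t(k)\le n_k e^{-t}$. Taking the minimum of these two bounds and splitting $[1,\infty)$ at $t_0=\cclee k$ (or using only the second bound when $\cclee k<1$),
\begin{equation}
\int_1^\infty \E E_t(k)\,dt\le 2n_k e^{-\cclee k}\cdot\cclee k+n_k e^{-\cclee k}+n_k e^{-1}\le Ck\,n_k e^{-\cclee k}+n_k e^{-1}.
\end{equation}
Since $k^2 e^{-\cclee k}$ is uniformly bounded in $k\ge 1$, both terms are at most $C'n_k/k$ for an absolute constant $C'$, and combining the two regimes yields $\E E^*(k)\le \CCleex n_k/k$ (the statement for $k=0$ being vacuous).

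The substantive work is entirely contained in Lemma~\ref{LEE}; the present lemma is essentially a clean packaging of its consequences together with the elementary clock-based bound, and the only thing requiring mild care is balancing the two bounds on $[1,\infty)$ so that their combination gives the correct rate $n_k/k$ uniformly over $k\ge 1$ (for small $k$ the $n_k/k$ bound is implied by $\E E^*(k)\le n_k$, for large $k$ it follows from the fast decay $k^2 e^{-\cclee k}\to 0$).
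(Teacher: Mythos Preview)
Your overall approach is sound, but there is a slip in the final estimate. The displayed bound
\[
\int_1^\infty \E E_t(k)\,dt \le Ck\,n_k e^{-\cclee k}+n_k e^{-1}
\]
is correct, but the subsequent claim that ``both terms are at most $C'n_k/k$'' fails for the term $n_k e^{-1}$ once $k$ is large: that would force $k\le C'e$. Your closing parenthetical shows you understand the small-$k$/large-$k$ dichotomy, but the formal argument as written does not respect it, since you have summed the bounds from the two cases into a single inequality valid for all $k$. The repair is immediate: keep the cases separate. When $\cclee k \ge 1$, use only the split at $t_0=\cclee k$, giving $\int_1^\infty \le (2\cclee+1)\,k n_k e^{-\cclee k} \le C'n_k/k$ via the boundedness of $k^2 e^{-\cclee k}$; for the finitely many $k$ with $\cclee k < 1$, the trivial bound $\E E^*(k)\le n_k \le \cclee^{-1} n_k/k$ already suffices.

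The paper's proof is a little slicker and sidesteps this issue entirely. Rather than integrating over $[1,\infty)$, it observes that the number of empty degree-$k$ vertices that go off after time $1$ is at most $E_1(k)$, since each such vertex goes off at most once. Hence that contribution is bounded in expectation by $\E E_1(k)\le 2n_k e^{-\cclee k}\le C n_k/k$, using only that $k e^{-\cclee k}$ is uniformly bounded. Your clock bound $\E E_t(k)\le n_k e^{-t}$ and the split at $t_0$ are then unnecessary.
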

\begin{proof}
  The expected number of empty vertices of degree $k$ that go off at a time
  in $[0,1]$ is, using  \refL{LEE},
  \begin{equation}
\int_0^1\E E_k(t)\,dt \le 2n_k\int_0^1 e^{-\cclee kt}\,dt \le \CC\frac{n_k}k.	
  \end{equation}
Furthermore, also by \refL{LEE}, the expected number of empty vertices of
degree $k$ that go off after 1 is at most
\begin{equation}
  \E E_1(k)\le 2n_k e^{-\cclee k} \le \CC\frac{n_k}k.
\end{equation}
\end{proof}

\begin{lemma}
  For the martingales $M$ and $M(k)$ in \eqref{qM} and \eqref{qMk},
  \begin{equation}
	\E [M]_\infty = O(n),\qquad \E [M(k)]_\infty = O(n).	
  \end{equation}
\end{lemma}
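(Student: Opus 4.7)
The plan is to bound $[M]_\infty$ and $[M(k)]_\infty$ pathwise by summing squared jump sizes over the ``going-off'' events of empty vertices, then take expectations and apply \refL{LEEx}. The point is that the estimate $\E E^*(j)\le \CCleex\, n_j/j$ from that lemma is exactly the factor needed to turn $\sum_j j^2 E^*(j)$ into something of order $\sum_j j n_j$, which is $O(n)$ by our standing assumption $\sum_k kn_k\le \CCmu n$.

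For $M$, every nonzero jump of $U_t$ occurs when the clock at some empty vertex $v$ of degree $j\ge 1$ goes off, and at that moment $|\Delta U_s|=2j-2L\le 2j$, where $L\ge 0$ is the number of loops created at $v$. Consequently $[M]_\infty\le 4\sum_{j\ge 1}j^2 E^*(j)$ pathwise, so
\begin{equation}
  \E[M]_\infty \;\le\; 4\CCleex\sum_{j\ge 1} j\,n_j \;\le\; 4\CCleex\CCmu\, n \;=\; O(n)
\end{equation}
by \refL{LEEx}.

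For $M(k)$, a going-off event at an empty vertex of degree $j$ changes $E_t(k)$ by at most $j+\1\{j=k\}\le j+1$: at most $j$ empty degree-$k$ neighbours get freshly blocked (one per half-edge of the firing vertex), plus one more if the firing vertex itself had degree $k$. Hence $[M(k)]_\infty\le\sum_{j\ge 0}(j+1)^2 E^*(j)$ pathwise. The $j=0$ term contributes at most $E^*(0)\le n_0\le n$, and for $j\ge 1$ we use the crude bound $(j+1)^2\le 4j^2$ together with \refL{LEEx} exactly as in the argument for $M$ to obtain a remainder of $O(n)$.

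No deeper machinery is needed: the only bookkeeping is in verifying the pathwise jump-size bounds, and all the substantive work is already encapsulated in \refL{LEE} and \refL{LEEx}. There is no real obstacle here; the lemma is essentially a corollary of the preceding one.
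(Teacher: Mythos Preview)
Your proof is correct and follows essentially the same approach as the paper: bound the quadratic variations pathwise by $\sum_j (2j)^2 E^*(j)$ and $\sum_j (j+1)^2 E^*(j)$, then apply \refL{LEEx} together with $\sum_j jn_j\le \CCmu n$. Your explicit treatment of the $j=0$ term in the $M(k)$ bound is a nice touch that the paper glosses over.
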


\begin{proof}
  Since $M$ jumps by at most $2j$ when an empty vertex of degree $j$ goes
  off, and $M$ otherwise is continuous, we have, \cf{} \eqref{qM},
$[M]_\infty \le \sum_j (2j)^2 E^*(j)$. Thus \refL{LEEx} implies
  \begin{equation}
	\E [M]_\infty \le \sumj (2j)^2 \E E^*(j) \le 4\CCleex\sumj j n_j
\le
\CC n.
  \end{equation}
Similarly, \cf{} \eqref{qMk}, $[M(k)]_\infty\le \sumjo (j+1)^2 E^*(j)$
and $	\E [M(k)]_\infty \le \CC n$ follows.
\end{proof}

This gives the estimates we need for the martingales.

Furthermore, \refL{LEE} implies that for any fixed integer $m\ge1$ and $t>0$,
$\E \sum_k k^m E_t(k)/n \le 2 \sum_k k^m e^{-\cclee k (1 \land t)}=O(1)$.
Hence the random variables $\sum_k k^m E_t(k)/n$ are tight, so, when taking
subsequences in the proof, we may also assume that
$\sum_k k^m E_t(k)/n$ converges in distribution, for any fixed $m$ (or all
$m$) and, say, any rational $t$.
Thus, when using the Skorokhod coupling lemma, we may also assume that
$\sum_k k^m E_t(k)/n$ converges a.s. In particular, choosing $m=2$,
$\sum_k k^2 E_t(k)/n$ is \as{} bounded for each fixed rational $t$, and
thus for each fixed $t>0$, since the sum is a decreasing function of $t$. This implies
uniform summability of $k E_s(k)/n$, for any fixed $s>0$, and thus uniform
summability in \eqref{elf1} and \eqref{elf2}, by simple modifications of the
arguments in \refS{sec:analysis}; hence \eqref{elf1} and \eqref{elf2} hold
for $s>0$.
Similarly, $E_s(k)/n$ is uniformly summable for any fixed $s>0$, and
\eqref{sumElim} follows.

There are no other changes to the proofs of Theorems \ref{thm.main} and \ref{thm.degrees}, since
$\mu=\sum kp_k$ does not appear in the proof (except in a trivial way before
\eqref{ebb}).  We thus have the following result for random multigraphs.

\begin{theorem} \label{thm.ht}
Let $(p_k)_0^{\infty}$ be a probability distribution. Assume that $n_k/n \to p_k$ for each $k \in \Z^+$ and that $\sumk k n_k/n$ converges to
a finite limit $\lambda$ as $n \to \infty$.
Let $S^{(n)}_\infty$ denote the size of a random greedy independent set in the random multigraph $G^*(n,(d_i)_1^n)$.

Let $\tau_\infty$ be the unique value in $(0,\infty]$ such that
\begin{equation*} 
\lambda \int_0^{\tau_\infty} \frac{e^{-2\sigma}}{\sum_k k p_k e^{-k\sigma}} \, d\sigma = 1.
\end{equation*}
Then
\begin{equation*} 
\frac{S^{(n)}_\infty}{n} \to \lambda \int_0^{\tau_\infty} e^{-2\sigma}
\frac{\sum_k p_k e^{-k\sigma}}{\sum_k k p_k e^{-k\sigma}} \, d\sigma \quad \mbox{in probability}.
\end{equation*}

Moreover, for each $k=0,1,\dots$,
\begin{equation*} 
\frac{S^{(n)}_\infty(k)}{n} \to \lambda \int_0^{\tau_\infty} e^{-2\sigma}
\frac{p_k e^{-k\sigma}}{\sum_j j p_j e^{-j\sigma}} \, d\sigma \quad \mbox{in
  probability}.
\end{equation*}
\end{theorem}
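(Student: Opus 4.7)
The plan is to adapt the proof of Theorems \ref{thm.main} and \ref{thm.degrees} from \refS{sec:analysis}, replacing each use of uniform summability of $kn_k/n$ with a direct estimate exploiting the fact that, in the multigraph setting, high-degree empty vertices get blocked extremely quickly. Inspection of \refS{sec:analysis} identifies exactly three such uses: bounding the quadratic variations of the martingales $M$, $M(k)$, $\tM$ in \eqref{qM}, \eqref{qMk} and the analogue for $\tM$; interchanging limit and summation in \eqref{elf1} and \eqref{elf2}; and the analogous interchange in \eqref{sumElim}. Everything else (tightness via Arzela--Ascoli, the subsequential identification of the ODE system, the change of variables \eqref{he} and time-change \eqref{eq.time-change}, and the extension of convergence of $S_t/n$ from fixed $t$ to $t=\infty$ through \eqref{tri}) goes through verbatim once these three items are replaced.

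The centerpiece of the replacement is an exponential estimate $\E E_t(k)\le 2n_k e^{-c_1 kt}$ valid for $t\in[0,1]$. I would prove it by a two-scale coupling: for $n$ large, fix $\ell$ with $p_\ell>0$; up to the stopping time $\tau_t$ at which $\lceil\gd tn\rceil$ edges have been created, there remain at least $\tfrac14 p_\ell n$ empty vertices of degree $\ell$, each ringing at rate $1$ and creating at least one edge per ringing, so edge creation stochastically dominates a Poisson process of rate $\Omega(n)$ and a Chernoff bound gives $\Pr(\tau_t>t)\le e^{-c tn}$; and before $\tau_t$ any fixed degree-$k$ vertex is blocked at each pairing with probability at least $k/(Cn)$, so it remains empty at $\tau_t$ with probability at most $(1-k/(Cn))^{\gd tn}\le e^{-c'kt}$, which (using $n_k=0$ for $k>Cn$) combines to the claimed bound. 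Integrating in $t$, together with $\E E_t(k)\le \E E_1(k)\le 2n_k e^{-c_1 k}$ for $t\ge 1$, yields the auxiliary estimate $\E E^*(k)\le C n_k/k$ where $E^*(k)$ counts degree-$k$ vertices empty at their own ringing. With these two lemmas in hand, the jump of $M$ at a degree-$j$ ringing is at most $2j$, so $[M]_\infty\le\sum_j(2j)^2 E^*(j)$ and $\E[M]_\infty\le C\sum_j j n_j=O(n)$; the same estimate applies to $M(k)$ and $\tM$, restoring the $\op(n)$ control on the martingales via Doob's inequality. For the interchange of limit and summation, the exponential bound in $k$ gives $\E\sum_k k^m E_t(k)/n=O(1)$ for every $m$ and $t>0$; hence along a common subsequence, tightness and Skorokhod coupling let us assume that $\sum_k k^2 E_t(k)/n$ is a.s.\ bounded at every positive rational $t$, and then by monotonicity in $t$ at every positive $t$. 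This boundedness of the second moment of the empty-degree distribution at each fixed $s>0$ supplies exactly the uniform summability of $a_s(k)$, $b_s(j)$ and $E_s(k)/n$ that the proof of Theorem \ref{thm.main} needs to pass to the limit in \eqref{elf1}, \eqref{elf2} and \eqref{sumElim}.

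With those replacements in place, the remaining analysis reproduces word for word. The ODE system \eqref{eq.diff-u}--\eqref{eq.diff-e} is set up identically with initial datum $u_0=\lambda$ (which may exceed $\mu:=\sum_k kp_k$), and $\mu$ does not otherwise appear in the ODE argument: the only place where strict positivity of $\sum_k ke_t(k)$ is invoked is to make the time-change \eqref{eq.time-change} bijective, and this positivity still follows from $e_t(\ell)>0$ for any $\ell$ with $p_\ell>0$, exactly as in \eqref{ebb}. The explicit solution \eqref{eq.sol-u}--\eqref{eq.sol-h}, the formula \eqref{eq.tauinfty} for $\tau_\infty$, and the final evaluations of $s_\infty$ and $s_\infty(k)$ via \eqref{soo=}--\eqref{eq.jamming} are then unchanged, yielding the two convergence statements of Theorem \ref{thm.ht}. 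The main obstacle is the exponential bound in Lemma \ref{LEE}: uniform summability fails precisely because $\sum_k k^2 n_k$ may be unbounded, so the estimate on $\E E_t(k)$ must decay exponentially in $k$ (not merely in $t$) in order for $\sum_j j^2 n_j e^{-c_1 j t}$ to be $O(n)$, and getting that decay uniformly for all $k\le Cn$ is what the two-scale stopping-time argument is designed to deliver.
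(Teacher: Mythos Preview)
Your proposal is correct and follows essentially the same route as the paper: you identify the same three places where uniform summability is invoked, prove the same exponential decay lemma $\E E_t(k)\le 2n_k e^{-c_1kt}$ via the same stopping-time/Chernoff argument, derive the same corollary $\E E^*(k)\le Cn_k/k$ to control the quadratic variations, and use the same moment-tightness-plus-Skorokhod trick to recover uniform summability of $kE_s(k)/n$ at positive times. The only superfluous detail is invoking the $E^*$ machinery for $\tM$, whose quadratic variation is already bounded by $n$ since all jumps of $S_t$ equal~$1$; this is harmless.
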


\begin{example}\label{Estar}
  For an extreme example, let $d_1=n-1$ and $d_2=\dots=d_n=1$, \ie, there is
  one vertex of degree $n-1$ and all others have degree 1. The limiting
  distribution is concentrated at 1: $p_k=\gd_{k1}$ so $\mu=1$, but the
 average vertex degree $\sum_k kn_k/n = 2(1-1/n)$ which converges to
 $\lambda=2>\mu$.

\refT{thm.main} applies. The equation \eqref{eq.tauinfty1} is
$\lambda\int_0^{\tau_\infty}e^{-\gs}\,d\gs=1$, and thus $\tau_\infty=\log 2$.
Then \eqref{eq.jamming1} yields, with convergence in probability,
\begin{equation}
\frac{S_\infty}n \to s_\infty
=\lambda\int_0^{\tau_\infty} e^{-2\gs}\,d\gs
=2\int_0^{\log2} e^{-2\gs}\,d\gs
=\frac{3}4.
\end{equation}

This can also be seen directly.
W.h.p., $G^*(n, (d_i)_1^n)$ consists of a star with centre at vertex 1,
$\approx n/2$ leaves and $\approx n/4$ loops at the centre, together with
$\approx n/4$ isolated edges. W.h.p., one of the leaves goes
off before the centre of the star, and then the centre is blocked and the
greedy independent set will actually be of maximum size, containing all the leaves of
the star and one vertex from each isolated edge, together $\approx 3n/4$
vertices.

Note that only the multigraph version of the theorem applies in this case.
The random simple graph $G(n, (d_i)_1^n)$ is deterministic and is a star
with $n-1$ leaves. W.h.p, the first vertex that goes off is a leaf; then the
centre is blocked and the greedy independent set will consist of all the leaves.
Thus $S_\infty=n-1$ \whp{} in the simple graph case.
\end{example}

Finally, we consider the case where $\lambda=\infty$.
In this case, our formulae
are no longer meaningful, but we can give at least a partial description of the outcome of the process via a different argument.
In the case where $\sum_k p_k = 1$, the result below implies that, \whp, almost all the vertices end up
in the independent set, i.e., the jamming constant $s_\infty=1$.

\begin{theorem} \label{thm.v-large}
Suppose that
$\gln = \sum_k k n_k /n \to\infty$.  Let $r(n)$ be the number of vertices
of degree at most $\min(\gln^{1/8}, n^{1/6})$.  Then, for every $\delta > 0$, $S_\infty \ge (1 - \delta)r(n)$ \whp
\end{theorem}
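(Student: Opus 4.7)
My plan is to show that $\Pr(v\notin\cS)\to 0$ uniformly over vertices $v$ with $\deg v\le K:=\min(\gln^{1/8},n^{1/6})$, after which Markov's inequality completes the proof. Write $\cV_K$ for the set of such vertices, so $|\cV_K|=r(n)$. Fix $v\in\cV_K$ with degree $k$, and expose the $k$ partners $w_1,\dots,w_k$ (with multiplicity) of $v$'s half-edges in the configuration model's matching. A direct inspection of the dynamics shows that $v$ can only become blocked when some $w_i$'s clock rings while $w_i$ is still empty (equivalently, $w_i\in\cS$) at a time $T_{w_i}<T_v$, and conversely such an event does block $v$ before its own clock rings. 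Hence
\begin{equation*}
\{v\notin\cS\}=\bigcup_{i=1}^{k}\{w_i\in\cS\text{ and }T_{w_i}<T_v\},
\end{equation*}
and the union bound gives $\Pr(v\notin\cS)\le\sum_{i=1}^{k}\Pr(w_i\in\cS)$.

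The core of the argument is then to bound $\Pr(w_i\in\cS)$. The partner $w_i$ has an approximately size-biased degree distribution, $\Pr(\deg w_i=j)\approx jn_j/N$ with $N:=\sum_id_i\approx\gln n$, and by exchangeability of the configuration model within each degree class, $\beta_j:=\Pr(w_i=u,u\in\cS)$ does not depend on the choice of $u\ne v$ with $\deg u=j$. Two estimates are available: trivially $\beta_j\le\Pr(w_i=u)=j/(N-1)$, and by Lemma~\ref{LEEx} combined with exchangeability, $\beta_j\le\Pr(u\in\cS)=\E E^*(j)/n_j\le C/j$ for a suitable constant $C$. Taking the minimum and splitting the sum at $j=\sqrt{CN}$ gives
\begin{equation*}
\Pr(w_i\in\cS)=\sum_j n_j'\beta_j\le\sum_j n_j\min\Bigpar{\frac{j}{N-1},\frac{C}{j}}\le 2n\sqrt{C/N}=2\sqrt{C/\gln}.
\end{equation*}
Hence $\Pr(v\notin\cS)\le 2K\sqrt{C/\gln}$, and the choice of $K$ ensures $K/\sqrt{\gln}\to 0$ in both cases: when $\gln\le n^{4/3}$ one has $K=\gln^{1/8}$ and $K/\sqrt{\gln}=\gln^{-3/8}$, while when $\gln>n^{4/3}$ one has $K=n^{1/6}$ and $K/\sqrt{\gln}<n^{-1/2}$. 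Summing over $\cV_K$ and applying Markov's inequality yields $\Pr(|\cV_K\setminus\cS|\ge\delta r(n))\to 0$, and since $S_\infty\ge r(n)-|\cV_K\setminus\cS|$ we obtain $S_\infty\ge(1-\delta)r(n)$ with high probability.

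The main obstacle is the bound on $\beta_j$, where one must (i)~carry out the exchangeability argument carefully to account for the conditioning on $w_i$ being a partner of $v$ and for potential repetitions among the $w_i$, and (ii)~verify that the constant $C$ in Lemma~\ref{LEEx} can be taken uniform in $n$. The second point needs attention in the regime $\sum_{k\ge1}p_k=0$, where Lemma~\ref{LEE} as stated does not apply; one would instead use a reference vertex of diverging degree $\ell(n)\to\infty$ as the driver of edge-creation to recover the estimate $\E E^*(j)\le Cn_j/j$.
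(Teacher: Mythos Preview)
There is an arithmetic slip that collapses the argument. You write
\[
2n\sqrt{C/N}=2\sqrt{C/\gln},
\]
but since $N=\sum_i d_i=n\gln$, in fact $2n\sqrt{C/N}=2n\sqrt{C/(n\gln)}=2\sqrt{Cn/\gln}$, carrying an extra factor of $\sqrt n$. With the corrected bound your estimate reads
\[
\Pr(v\notin\cS)\;\le\;2K\sqrt{Cn/\gln},
\]
and in the regime $\gln\le n^{4/3}$, where $K=\gln^{1/8}$, this is $2\sqrt C\,n^{1/2}\gln^{-3/8}$, which tends to $0$ only when $\gln\gg n^{4/3}$ --- precisely the complementary case. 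So the bound is vacuous on most of the range. A concrete test: take $n/2$ vertices of degree~$1$ and $n/2$ of degree~$\sqrt n$, so $\gln\asymp\sqrt n$ and $K\asymp n^{1/16}$; here your min-bound gives $\Pr(v\notin\cS)\lesssim n^{1/16}\cdot n^{1/4}\to\infty$.

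The underlying problem is that the estimate $\beta_j\le\min\bigl(j/(N-1),\,C/j\bigr)$ is too crude: it yields at best $\sqrt{C/(N-1)}$ per vertex, and summing over $\sim n$ vertices gives $\sqrt{Cn/\gln}$. What you actually need is a \emph{product} bound of the shape
\[
\beta_j=\Pr(w_i=u)\,\Pr(u\in\cS\mid w_i=u)\;\lesssim\;\frac{j}{N-1}\cdot\frac{C}{j}=\frac{C}{N-1},
\]
which would sum to $Cn/(N-1)\approx C/\gln$ and then give $\Pr(v\notin\cS)\le CK/\gln\to 0$. But that requires showing that conditioning on $u$ being a neighbour of $v$ does not substantially raise the probability that $u$ enters the independent set, and this step is not supplied (and is not automatic, since the event $\{w_i=u\}$ alters the rest of the configuration). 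Your own final paragraph flags the uniformity of $C$ across $n$ and the failure of Lemma~\ref{LEE} when all $p_k=0$ as the main obstacles; in fact the decisive gap is earlier, in the passage from the min-bound to a useful estimate on $\Pr(w_i\in\cS)$.

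For comparison, the paper's proof avoids this joint-distribution issue entirely. It fixes a small time $\eps=\eps(n)\to0$ and argues in three steps: (i) by time $\eps$, w.h.p.\ at most $3\eps\gln n$ half-edges remain at empty vertices (high-degree vertices are blocked quickly, handled in three degree ranges); (ii) a vertex $v$ of degree $\le K$ is unlikely to have gone off or been blocked by time $\eps$; and (iii) after time $\eps$, with $\ge\tfrac12\gln n$ free half-edges still present but only $O(\eps\gln n)$ of them at empty vertices, the conditional probability that any of $v$'s $\le K$ half-edges lands at an empty vertex is $O(K\eps)=o(1)$. No control of $\Pr(u\in\cS\mid w_i=u)$ is needed.
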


\begin{proof}
By considering subsequences, we may assume that $n_0/n\to p_0$ for some
$p_0\in[0,1]$. The case $p_0=1$ is trivial, so we assume
$p_0 = \lim_{n\to \infty} n_0/n < 1$.

Let $\eps = \eps(n) = \max( \gln^{-1/7}, n^{-1/5} ) \to 0$, and note that $\eps^6 \gln/ \log(1/\eps)$ and $\eps^4 n$ both tend to infinity.

Our first aim is to show that, at time $\eps$, \whp, there are at most $3\eps \gln n$ half-edges incident with empty vertices.
The total number of half-edges incident with empty vertices of degrees up to $\eps \gln$ is at most $\eps \gln n$, so we turn our
attention to vertices of larger degree.

Let $\tau_1$ be the first time that there are fewer than $\eps^3 n$ empty vertices of degree
at least $\eps \gln$.  Let $\tau_2$ be the time when the $\lceil \frac14 \eps^5 \gln n \rceil$-th edge is paired.
We claim that (a)~$\tau_1 \le \tau_2$ \whp, and (b)~$\tau_1 \wedge \tau_2
\le \eps$ w.h.p.
Together these two imply that $\tau_1 \le \eps$ \whp

To show (a), note that, as in the proof of Lemma~\ref{LEE}, for every vertex $v$ of degree $k \ge \eps \gln$,
\begin{equation*} 
\Pr \big( v \mbox{ is empty at time } \tau_2 \big) \le \Big( 1 - \frac{k}{\gln n} \Big)^{\frac14 \eps^5 \gln n}
\le e^{- \frac14 \eps^6 \gln} = o(\eps^3).
\end{equation*}
Hence the expected number of empty vertices of degree at least $\eps \gln$ at time $\tau_2$ is
$o(\eps^3 n)$, and the probability that there are as many as $\eps^3 n$ such vertices is $o(1)$.

To show (b), note that, before time $\tau_1$, vertices of degree at least $\eps \gln$ go off at a rate of at
least $\eps^3 n$; when any such vertex goes off, at least $\frac12 \eps \gln$ edges are paired.
For $0 \le t \le \tau_1$, let $P_t$ be the number of edges paired by time~$t$.
Then there is a Poisson process $Q_t$ of intensity $\eps^3 n$ such that $P_t / (\frac12 \eps \gln)$
dominates $Q_t$ for $t \le \tau_1$, and
\begin{equation*} 
\Pr \big(Q_\eps < \frac12 \eps^4 n \big) = o(1).
\end{equation*}
This implies that, \whp, either $\tau_1 < \eps$ or $P_\eps \ge \frac14 \eps^5 \gln n$, in which case $\tau_2 \le \eps$.
This completes the proof of~(b).

We conclude that, \whp, $\tau_1 \le \eps$, so that the number of empty
vertices of degree at least
$\eps \gln$ at time $\eps$ is at most
$\eps^3 n$.  It follows that the number of half-edges incident with empty vertices of degrees between $\eps \gln$ and $\eps^{-2} \gln$
at time $\eps$ is at most $\eps \gln n$.

It remains to consider empty vertices of degrees greater than $\eps^{-2} \gln$.
For this range, we repeat the argument in Lemma~\ref{LEE}.
We take $n$ large enough that there are at least $\frac12 (1-p_0)n$ vertices of positive degree.
Let $\tau_\eps$ be the time when the $\lceil \frac19 \eps (1-p_0) n\rceil$-th edge is created by a pairing.
Until this time, there are always at least $\frac14 (1-p_0) n$ empty vertices of positive degree, and so the pairing rate is at least $\frac14 (1-p_0)n$.
Thus
\begin{equation*} 
\Pr(\tau_\eps > \eps) \le \Pr \Big( \Po \big(\frac14 \eps (1-p_0) n \big) < \frac18 \eps (1-p_0) n\Big) = o(1).
\end{equation*}
Now, for each vertex $v$ of degree $k \ge \eps^{-2}\gln$,
\begin{equation*} 
\Pr \big( v \mbox{ is empty at time } \tau_\eps \big) \le \Big( 1 - \frac{k}{\gln n} \Big)^{\frac19 \eps (1-p_0) n}
\le e^{- \frac19 \eps^{-1} (1-p_0)} = o(\eps).
\end{equation*}
Thus the expected number of half-edges incident with empty vertices of
degree at least $\eps^{-2} \gln$ at time $\tau_\eps$ is
$o(\eps \gln n)$.  Hence, \whp, there are at most $\eps \gln n$ such
half-edges at time $\eps$.

Summing over the three ranges of degrees, we see that
the number $U$ of half-edges incident with empty vertices at time $\eps$ is \whp{} at most
$3 \eps \gln n$.
The expected total number of half-edges incident with vertices that have gone off by time~$\eps$ is at most $\eps \gln n$, so \whp{}
at most $\frac 12 \gln n$ of the half-edges are paired by this time.  Hence, \whp, there are at least $\frac12 \gln n$ free half-edges at time~$\eps$,
most of which are incident with vertices that have already become blocked.

Now consider any vertex $v$ of degree $k \le k_0$, where
$k_0 = k_0(n) := \min( \gln^{1/8}, n^{1/6}) = o(\eps^{-1})$.  The
probability that $v$ has gone off by time $\eps$ is at most $\eps$
and the probability that $v$ is blocked by time $\eps$ is at most
$k_0\eps=o(1)$.

The conditional probability that $v$ will be blocked after time $\eps$, given
that $v$ has neither gone off nor has been blocked by time
$\eps$, and also given that $U\le3\eps\gln n$ and there are at least $\frac 12 \gln n$ free half-edges at time $\eps$,
is at most
the conditional probability that
any of the half-edges incident with $v$
are later paired with
a half-edge incident to another vertex that is empty at time $\eps$,
which is at most $k_0 U / \frac12 \gln n\le 6\eps k_0=o(1)$.
Thus the (unconditional) probability that $v$ will be blocked is
$o(1)$ and $v$ ends up in the independent set with probability
$1-o(1)$.  Hence the expected number of
vertices of degree at most $k_0$ that do not appear in $S_\infty$ is
$o(r(n))$, and the result follows.
\end{proof}

The function $\min (\gln^{1/8},n^{1/6})$ can certainly be improved via an adjustment of the parameters in the argument, but it seems unlikely that
this will give the best possible result.  As we shall see in Example~\ref{ex.F} below, some dependence on $n$ is essential.

We finish with two examples to illustrate some of the possible behaviours in this range.

\begin{example}
Fix constants $\alpha, \gamma>0$ with $2/3 > \alpha > 1/2 + \gamma$.

Take a set $A$ of $n^\alpha$ vertices of degree $n^\alpha$,
and a set $B$ of $n-n^\alpha$ vertices of degree $n^\gamma$.
So all but a proportion $\approx n^{1+\gamma -2\alpha} \ll n^{-\gamma}$ of
the half-edges are incident with $A$,
and so most vertices of $B$ have all their neighbours in $A$.  We have $\gln(n) \approx n^{2\alpha -1}$, and
the degrees of vertices in $B$ can be up to $\gln^{1/2 -\delta}$ for any $\delta > 0$.

Now consider what happens when the first $n^\alpha$ vertices go off
(including vertices that are already blocked when they go off).
W.h.p., at most $2n^{2\alpha -1}$ of the vertices that go off among the first $n^\alpha$ are in $A$,
and so at most $2n^{3\alpha -1} = o(n)$ vertices in $B$ become blocked as a result.
Also at most $n^{\alpha + \gamma} =o(n)$ vertices in $B$ become blocked as a result of vertices in $B$ going off.
Meanwhile, \whp, at least $\frac12 n^\alpha$ vertices in $B$ go off, and
this generates at least $\frac14 n^{\alpha + \gamma}$ pairings,
most of which are to vertices in $A$.  The probability that a given vertex of $A$ is not blocked during this process is
thus exponentially small.

So at this point, \whp, all vertices of $A$ are blocked, and almost all vertices of $B$ are empty,
with all their neighbours blocked.  Hence the independent set has size $(1-o(1))|B| = (1-o(1))n$.

This example shows, among other things, that the jamming constant of the random multigraph can approach~1 even if
all the $p_i$ are equal to~0.
\end{example}

If we allow $\gln$ to be extremely large, then there is an even greater variety of possible behaviour.
In the example below, the jamming constant is very far from being concentrated.

\begin{example} \label{ex.F}
Let $A$ be a set of $n/2$ vertices of degree $n^{1+\delta}$, for some $\delta > 0$, and let $B$ be a set of $n/2$ vertices of degree $n^\beta$,
where $\beta \ge 3 + 5\delta$.  So $\gln$ is approximately $\frac12 n^\beta$.  Then, \whp, every vertex in $B$ is adjacent to every vertex of
$A\cup B$, and no two vertices of $A$ are adjacent.

In this instance, if a vertex of $B$ is the first to go off, then $S_\infty = 1$, while if a vertex of $A$ is the first to go off, then
$S_\infty = n/2$.

By making $\beta$ large, we can ensure that the degrees of vertices of $A$ can be an arbitrarily small power of $\gln$.
\end{example}

\newcommand\vol{\textbf}
\newcommand\jour{\emph}
\newcommand\book{\emph}
\newcommand\inbook{\emph}
\def\no#1#2,{\unskip#2, no. #1,} 
\newcommand\toappear{\unskip, to appear}

\newcommand\arxiv[1]{\texttt{arXiv:#1}}
\newcommand\arXiv{\arxiv}

\def\nobibitem#1\par{}

\end{document}